\documentclass[12pt]{article}

\usepackage{amsmath,amssymb,amsthm,mathtools,mathrsfs,latexsym}
\usepackage{graphicx}
\usepackage{xcolor}
\usepackage{tikz}
\usepackage[linkcolor=blue,colorlinks=true,urlcolor=red,bookmarksopen=true]{hyperref}
\usepackage{newclude}
\usepackage{imakeidx}
\makeindex

\textheight 217mm \textwidth 164mm
\setlength{\arraycolsep}{0.5mm} \headsep=6mm \headheight=5mm
\topmargin=0mm \oddsidemargin=0pt \evensidemargin=0pt
\allowdisplaybreaks

\pagenumbering{arabic} \numberwithin{equation}{section}

\newtheorem{thm}{Theorem}[section]

\newtheorem{lema}{Lemma}[section]
\newtheorem{prop}{Proposition}[section]

\newtheorem{rmk}{Remark}[section]

\renewcommand{\Pi}{\mathcal{P}}

\newcommand{\Fi}{\mathcal{F}}

\renewcommand{\Xi}{\mathcal{X}}

\newcommand{\R}{\mathbb{R}}

\newcommand{\I}{\mathbb{I}}

\newcommand{\T}{\mathbb{T}}

\let\div\relax
\DeclareMathOperator*{\div}{div}

\DeclareMathOperator*{\Id}{{Id}}

\DeclarePairedDelimiter\abs{\lvert}{\rvert}
\DeclarePairedDelimiter\norm{\lVert}{\rVert}
\makeatletter
\let\oldabs\abs
\def\abs{\@ifstar{\oldabs}{\oldabs*}}
\let\oldnorm\norm
\def\norm{\@ifstar{\oldnorm}{\oldnorm*}}
\makeatother

\usepackage{scalerel}
\usepackage[usestackEOL]{stackengine}
\def\dashint{\,\ThisStyle{\ensurestackMath{
\stackinset{c}{.2\LMpt}{c}{.5\LMpt}{\SavedStyle-}{\SavedStyle\phantom{\int}}
}
\setbox0=\hbox{$\SavedStyle\int\,$}\kern-\wd0}\int}

\newcommand{\br}[1]{\left(#1\right)}

\newcommand{\set}[1]{\left\{#1\right\}}


\usepackage{adjustbox}
\usepackage{wrapfig}
\usepackage{enumitem}
\usepackage{protosem}

\title{Global existence and optimal time-decay rates of the compressible Navier-Stokes equations with density-dependent viscosities}
\date{}
\author{
\bf\large  Jie Fan$^a$, Xiangdi Huang$^a$, Anchun Ni$^a$\\
\small a. State Key Laboratory of Mathematical Sciences, Academy of Mathematics and Systems Science\\
\small Chinese Academy of Sciences, Beijing 100910, P.R.China;\\
\footnote{*Email addresses: fj0828@outlook.com\,\, (J. Fan); xdhuang@amss.ac.cn\,\, (X. Huang);  varnothing@outlook.com\, \, (A. Ni); } }
\begin{document}
\maketitle

\begin{abstract}
This paper  is devoted to studying the Cauchy problem for the three-dimensional isentropic compressible Navier-Stokes equations with density-dependent  viscosities given by $\mu=\rho^\alpha,\lambda=\rho^\alpha(\alpha>0)$. We establish the global existence and  optimal decay rates of classical solutions under the assumptions of small initial data in $L^1(\mathbb{R}^3)\cap L^2(\mathbb{R}^3)$ and the viscosity constraint  $|\alpha-1|\ll 1$. The key idea of our proof lies in the combination of Green's function method, energy method and a time-decay regularity criterion. In contrast to previous works, the Sobolev norms of the spatial derivatives of the initial data may be arbitrarily large in our analysis. \\[4mm]
{\bf Keywords:} compressible Navier-Stokes equations; global well-posedness; optimal decay rates; density-dependent  viscosities\\[4mm]
{\bf Mathematics Subject Classifications (2010):} 76N10, 35B45.\\[4mm]
\end{abstract}

\section{Introduction}
We consider the isentropic compressible Naiver-Stokes equations with  density-dependent viscosities in $\R^3$:
\begin{equation}
\label{CNS}
	\begin{dcases}
		\partial_t \rho +\div (\rho u)=0,\\
		\partial_t(\rho u )+ \div (\rho u\otimes u)+\nabla P=\div \T,\\
		\rho|_{t=0}=\rho_0, u|_{t=0}=u_0,
	\end{dcases}
\end{equation}
Here $\rho(t,x)$ and $u(t,x)$ denote the density and velocities of the fluid respectively. The pressure $P$ and the viscous stress tensor $\T$ are given by:
\begin{equation}
\label{pressure law}
	\begin{split}
		&P=\frac{1}{\gamma}\rho^\gamma  \text{ and }  \T=2\mu (\rho) Du+\lambda(\rho) \div u  \I_3, 
	\end{split}
\end{equation}
where we assume $\gamma>1$ and $Du =\frac{1}{2} (\nabla u+(\nabla u)^T )$ is the deformation tensor and $I_3$ is the three-dimensional identity matrix. The shear and bulk viscosities $ \mu (\rho), \lambda(\rho)$ are dependent with the density and satisfy the following hypothesis:
\begin{equation}
\label{viscosity}
	\mu (\rho)=\mu \rho^\alpha, \lambda(\rho)=\lambda\rho^\alpha \text{ and } \alpha>0, \mu>0, 2\mu +3\lambda\ge 0.
\end{equation}

The investigation of the compressible Navier–Stokes equations with density-dependent viscosities $\mu = \mu(\rho)$ and $\lambda = \lambda(\rho)$ is complicated due to the strong nonlinearity and possible degeneracy. Vasseur-Yu \cite{2016 Vasseur-Yu-IM}, and Li-Xin \cite{2015 Li-Xin-Preprint} independently proved the global existence of weak solutions in the density-degenerate case, based on the significant a priori estimates developed by Bresch-Desjardins \cite{2003 Bresch-Desjardins-CMP}, and Mellet-Vasseur \cite{2007 Mellet-Vasseur-CPDE}. The global well-posedness of classical solutions under sufficiently large initial density was established by Yu \cite{2023 Yu-MMAS} and Huang–Li–Zhang \cite{2024 Huang-Li-Zhang-Preprint1}, using a method adapted from Huang–Li–Xin \cite{2012 Huang-Li-Xin-CPAM}, which was originally developed for small-energy classical solutions in the constant-viscosity case. 

The behavior of solutions near non-vacuum equilibrium for the compressible Navier–Stokes equations was first studied by Matsumura-Nishida \cite{1980 Matsumura-Nishida-JMKU}, where they proved the global existence of classical solutions and derived decay rates under the assumption of small initial data in Sobolev space $H^s$. Zeng \cite{1994 Zeng-CPAM} was the first to adopt Green’s function analysis for the one-dimensional isentropic compressible Navier–Stokes equations. Hoff-Zumbrun \cite{1995 Hoff-Zumbrun-IUMJ, 1997 Hoff-Zumbrun-ZAMP} studied multi-dimensional compressible flows with artificial viscosity and established uniform decay rates for diffusive waves. Furthermore, Liu-Wang \cite{1998 Liu-Wang-CMP} and Li \cite{2005 Li-CMP} carefully investigated the properties of Green’s functions for the isentropic and full Navier–Stokes systems, respectively, and presented optimal time-decay rates in odd dimensions, where the weaker Huygens’ principle appears due to stronger dispersion effects in odd-dimensional spaces. Guo-Wang \cite{2012 Guo-Wang-CPDE} also developed a general framework to study the optimal decay rates of higher-order spatial derivatives for dissipative systems, including the compressible Navier–Stokes equations.
It should be noted that all of the above decay estimates are derived under the assumption of small initial data in Sobolev norms:
\begin{equation*}
	\norm{(\rho_0-\tilde \rho , u )}_{H^s}\ll 1,
\end{equation*}
where $\tilde \rho$ is the equilibrium density. Nevertheless, few studies have investigated the decay rates of solutions when the viscosities are density-dependent.
Luo-Yang \cite{2025-luo-jde} obtained the optimal decay rates for higher-order derivatives in the case where viscosity depends on density ($\mu(\rho)=\alpha\rho,\lambda(\rho)=\beta\rho$), under the following conditions:
$$\sum_{l=0}^{N}\Big(\|\nabla^l(\rho-1)\|^2_{L^2}+\|\nabla^l u\|^2_{L^2}\Big)\ll1.$$

In this paper, our goal is to remove the restriction of requiring small derivatives and to establish the global existence and optimal decay rates of solutions to \eqref{CNS}–\eqref{viscosity} under the following partial smallness condition:
\begin{equation*}
	\norm{(\rho_0-\tilde \rho , u)}_{L^1\cap L^2}\ll 1,
\end{equation*}
and a proper restriction to the parameter $\alpha$.

To formulate our problem, we consider solutions to the compressible Navier–Stokes equations \eqref{CNS}–\eqref{viscosity} with density-dependent viscosities, near the equilibrium state $(\tilde \rho, 0)$. For convenience,  we set $\tilde \rho=1$ and take the replacement:
$$\rho \to \varrho +\tilde \varrho=\varrho +1, u\to u.$$
Then we obtain the following linearized compressible Naiver-Stokes equations:
\begin{equation}
\label{linearized system}
	\begin{dcases}
		\partial_t \varrho  +\nabla\cdot u=N_\varrho (\varrho, u),\\
		\partial_t u+\nabla \varrho-(\mu \Delta u+(\mu+\lambda)\nabla (\nabla \cdot u ) ) =N_u (\varrho, u),\\
        \varrho|_{t=0}=\varrho_0=\rho_0-1, u|_{t=0}=u_0,
	\end{dcases}
\end{equation}
where the nonlinear terms $N_\varrho, N_u$ are given by:
\begin{equation}
\label{nonlinear rho}
	N_\varrho (\varrho, u)=-\nabla\cdot (\varrho u),\quad
		N_u(\varrho, u)= N_1+N_2+N_3+N_4,\\
\end{equation}
with
\begin{equation}
\label{nonlinear u}
	\begin{split}
		N_1(\varrho, u)=& -u\cdot \nabla u,\\
		N_2(\varrho ,u)=& -H_\gamma(\varrho)\nabla \varrho, H_\gamma(\varrho)=(1+\varrho)^{\gamma-2}-1,\\
		N_3(\varrho, u)=& G_\alpha (\varrho) \br{2\mu \nabla \varrho \cdot Du +\lambda \nabla\varrho \div u }, G_\alpha (\varrho) =\alpha (1+\varrho)^{\alpha-2}, \\
		N_4(\varrho ,u)=& H_\alpha (\varrho)\br{\mu\Delta u+(\mu+\lambda )\nabla \div u }, H_\alpha (\varrho)=(1+\varrho)^{\alpha-1}-1.\\
	\end{split}
\end{equation}

The main result of this paper can be stated as follows:
\begin{thm}
\label{main theorem}
	Assume that the initial data $(\varrho_0, u_0)\in H^4(\R^3).$ There exists  sufficiently small constants $C_0,\delta>0$ such that if
	\begin{equation}
	\label{initial smallness}
		\begin{split}
			\norm{(\varrho_0, u_0 ) }_{L^1\cap L^2}\le C_0,
		\end{split}
	\end{equation}
    and\begin{equation}\label{alphasmall}
		|\alpha-1|\le \delta,
	\end{equation}
	then the Cauchy problem \eqref{CNS} has a unique global classical solution in $\mathbb{R}^3\times (0,\infty)$   satisfying
	\begin{equation*}
		(\varrho, u)\in C([0,\infty); H^4), \nabla u\in L^2([0,\infty);H^4).
	\end{equation*}
	Furthermore, we have the following optimal uniform decay estimate:
	\begin{equation}
	\label{optimal decay}
	\begin{split}
		&\norm{(\varrho, u) (t) }_{L^\infty}\le C(1+t)^{-\frac 32},\\
		&\norm{\nabla (\varrho, u) (t) }_{L^\infty}\le C(1+t)^{-2},\\
	\end{split}
	\end{equation}
	and the optimal higher-order decay estimates:
	\begin{equation}
	\label{Hk decay}
		\norm{\nabla^k (\varrho, u) }_{L^2}\le C(1+t)^{-\frac 34-\frac{k}{2}}, k=0,1,2.
	\end{equation}
For $2\leq q\leq\infty,$ it also holds
    \begin{equation}\label{lpoptimal}
\|\nabla^{k'}(\varrho,u)\|_{L^p} \leq C(1+t)^{-\frac{3}{2}(1-\frac{1}{p})-\frac{k'}{2}},
\end{equation}
with $k'=0,1.$
\end{thm}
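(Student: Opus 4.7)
The plan is to combine a Green's function analysis of the linearized system \eqref{linearized system} with high-order energy estimates, closed by a bootstrap on the decay rates \eqref{optimal decay}--\eqref{lpoptimal}. The subtle point is that only $\|(\varrho_0,u_0)\|_{L^1\cap L^2}$ is small, while $\|(\varrho_0,u_0)\|_{H^4}$ may be arbitrarily large, so time decay must compensate: after a short transient, the solution should enter a genuinely small regime in $L^\infty$ and $W^{1,\infty}$. First I would invoke standard parabolic--hyperbolic theory to produce a unique local classical solution $(\varrho,u)\in C([0,T_*);H^4)$ with $\nabla u\in L^2_tH^4_x$, together with a Beale--Kato--Majda-type continuation criterion in the spirit of Huang--Li--Xin. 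Then I let $T^*$ be the maximal time on which \eqref{optimal decay}--\eqref{Hk decay} hold with doubled constants and aim to recover the original constants on $[0,T^*]$, which by continuity forces $T^*=\infty$.

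The low-order decay in \eqref{optimal decay}, \eqref{lpoptimal} and in \eqref{Hk decay} for $k=0,1$ comes from Duhamel's formula
\[
(\varrho,u)(t)=e^{tL}(\varrho_0,u_0)+\int_0^t e^{(t-s)L}(N_\varrho,N_u)(s)\,ds,
\]
using the pointwise Green's function estimates of Liu--Wang \cite{1998 Liu-Wang-CMP}, which give $\|\nabla^{k'}e^{tL}(f,g)\|_{L^p}\lesssim (1+t)^{-\frac{3}{2}(1-1/p)-k'/2}\bigl(\|(f,g)\|_{L^1}+\|\nabla^{k'}(f,g)\|_{L^p}\bigr)$ for $2\le p\le\infty$ and $k'=0,1$. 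Plugging the bootstrap rates into $N_\varrho$ and $N_1,\dots,N_4$, and using the structural bounds $|H_\gamma(\varrho)|+|G_\alpha(\varrho)-\alpha|+|H_\alpha(\varrho)|\lesssim|\varrho|+|\alpha-1|$ together with smallness of $\|\varrho\|_{L^\infty}$ and of $|\alpha-1|$, each Duhamel integral can be bounded by the target decay rate, thereby recovering \eqref{optimal decay} and \eqref{lpoptimal}.

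The high-order energy estimates ($k=2,3,4$) are the heart of the argument, and here the viscosity constraint $|\alpha-1|\le\delta$ is decisive. For each such $k$ I would establish a differential inequality of the form
\[
\frac{d}{dt}\|\nabla^k(\varrho,u)\|_{L^2}^2+c\|\nabla^{k+1}u\|_{L^2}^2\le C\bigl(\|\nabla(\varrho,u)\|_{L^\infty}+\delta\bigr)\|\nabla^k(\varrho,u)\|_{L^2}^2+\Phi(t),
\]
where $\Phi(t)$ is integrable in $t$ thanks to the low-order decay obtained above. The term $N_4=H_\alpha(\varrho)(\mu\Delta u+(\mu+\lambda)\nabla\div u)$ is the dangerous one because it carries two derivatives of $u$, but $|H_\alpha(\varrho)|\lesssim|\alpha-1|+|\varrho|$ at small $\varrho$, and the $\delta$-smallness absorbs the corresponding contribution into the left-hand dissipation via effective-viscous-flux manipulations. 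A Gronwall argument then delivers uniform $H^4$ bounds in time, and interpolation against the low-order decay yields \eqref{Hk decay} and the derivative-$L^\infty$ decay in \eqref{optimal decay}. The time-decay regularity criterion furthermore ensures $\int_0^\infty\|\nabla u\|_{L^\infty}\,ds<\infty$, excluding blow-up and closing the bootstrap.

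The main obstacle will be propagating the high-order derivatives uniformly in time without initial smallness of the $H^4$ norm: a naive energy estimate produces factors like $\exp\bigl(\int_0^t\|\nabla u\|_{L^\infty}\,ds\bigr)$, and controlling this integral requires the low-order decay to be available \emph{a priori}, so that the bootstrap has to be interlocked rather than sequential. The structural fact that $H_\alpha\equiv 0$ when $\alpha=1$, so that $|\alpha-1|\ll 1$ absorbs the nonlinear viscous term into the linear dissipation, is what makes the whole scheme viable; essentially the analysis perturbs around the $\alpha=1$ case, which enjoys the Bresch--Desjardins entropy structure and for which the problem is substantially better behaved.
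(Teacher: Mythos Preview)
Your overall strategy---bootstrap on pointwise decay, close high-order energy estimates via the smallness of $|\alpha-1|$ on the $N_4$ term, and use Duhamel with Green's function bounds---is essentially the paper's approach. Two points, however, deserve correction or sharpening.

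First, your claim that ``interpolation against the low-order decay yields \eqref{Hk decay} and the derivative-$L^\infty$ decay in \eqref{optimal decay}'' does not work. Interpolating the optimal $\|\nabla V\|_{L^2}\lesssim(1+t)^{-5/4}$ against a merely bounded $\|\nabla^4 V\|_{L^2}$ gives at best $\|\nabla^2 V\|_{L^2}\lesssim(1+t)^{-5/6}$ and $\|\nabla V\|_{L^\infty}\lesssim(1+t)^{-5/16}$, far from the targets $(1+t)^{-7/4}$ and $(1+t)^{-2}$. In the paper both of these rates are obtained directly from Duhamel: the $W^{1,\infty}$ rate is part of the bootstrap hypothesis itself and is closed by estimating $\int_0^t\nabla G(t-s)*N(s)\,ds$ in $L^\infty$, while the $k=2$ rate is derived \emph{after} the bootstrap, again via Duhamel, in a separate inductive step. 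You should reorganize accordingly.

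Second, your Green's function treatment is too coarse. The paper decomposes $G=G_L+G_{HR}+G_{HS}$, where $G_{HS}$ is a genuinely singular piece $e^{-ct}\delta(x)$ in the density component that carries no smoothing; estimates involving $G_{HS}$ force all derivatives onto $N_\varrho$, which is why $H^4$ regularity (to bound $\|\nabla^2 V\|_{L^\infty}$) is needed. More subtly, to make $\int_0^{t/2}(1+t-s)^{-3/2}\|N(s)\|_{L^1}\,ds$ converge when closing $\|V\|_{L^\infty}$, the paper first establishes an auxiliary $L^{4/3}$ bound $\|V(t)\|_{L^{4/3}}\lesssim C_0(1+t)^{-7/40}$, since $\|V\|_{L^1}$ is unavailable and $\|V\|_{L^2}\|\nabla V\|_{L^2}\lesssim(1+t)^{-4/5}$ alone is not integrable under the bare energy bound. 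Your broader bootstrap (which already assumes $\|V\|_{L^2}\lesssim(1+t)^{-3/4}$) can sidestep this, but you should say so explicitly rather than leave the integrability of $\|N\|_{L^1}$ unchecked.
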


\begin{rmk}
\label{rmk 1.1}
	It should be noted that $L^1$-smallness is necessary to derive the optimal decay estimates \eqref{optimal decay} and \eqref{Hk decay}, although the solution may not  preserve in $L^1(\mathbb{R}^3)$ due to the possibility of negative perturbation of density.
\end{rmk}
\begin{rmk}
Unlike classical results \cite{2012 Guo-Wang-CPDE, 1995 Hoff-Zumbrun-IUMJ, 1997 Hoff-Zumbrun-ZAMP} that require smallness in the $H^N$-norm (with $N \geq 2$), our framework imposes smallness only on the $L^1\cap L^2$-norm while allowing the derivatives to remain bounded. Our results permits the Sobolev norms of the spatial derivatives of the initial data to be arbitrarily large.
\end{rmk}
\begin{rmk}
The condition \eqref{alphasmall} is essential for closing the high-order estimates, which suggests that our method might fail in the constant viscosity case. Moreover, our analysis covers the case $\alpha=1$.
\end{rmk}

\begin{rmk}
We first obtain the uniform decay estimates \eqref{optimal decay} via the continuity argument, upon which the optimal decay estimates \eqref{Hk decay}-\eqref{lpoptimal} is subsequently established.
\end{rmk}

We now outline the proof for Theorem \ref{main theorem} and highlight the key challenges and techniques involved. Our main strategy combines the linear analysis via the Green function (see Section \ref{sec-2}), and a dedicated nonlinear analysis which couples conditional energy estimates with a regularity criterion, which is formulated as uniform optimal decay estimates (see Section \ref{sec-3}).

 For the linear analysis, we have leveraged the advantages of the Green function of the linearized Navier-Stokes equations, and give out the following decomposition for the Green function $G$ according to the frequency analysis:
\begin{equation*}
	G=G_L+G_{HR}+G_{HS}.
\end{equation*}
Here $G_L$ denotes the low-frequency part, which behaves similarly to the heat kernel. $G_{HR}$ is the regular part related to the high-frequency regime, and it can accommodate one derivative in regularity estimates. $G_{HS}$ represents the singular part of the Green’s function; it acts like a Dirac delta function at the high-frequency level, but fortunately only appears in the continuity equation. The part is standard and can be checked in many other works concerning compressible Naiver-Stokes equation and related models (see \cite{2010 Chen-Miao-Zhang-CPAM, 1995 Hoff-Zumbrun-IUMJ, 1997 Hoff-Zumbrun-ZAMP, 2024 Qi-Wang-DCDS}).

\clearpage

Our main innovation focuses on the proof of nonlinear stability, which is sketched through the following bootstrap process:
\begin{equation*}
	\text{regularity criterion}\Longrightarrow \text{conditional energy estimates}\Longrightarrow\text{improved regularity criterion}.
\end{equation*}
The regularity criterion is designed in accordance with the linear stability and optimal uniform decay estimates (see \eqref{optimal decay}):
\begin{equation*}
	\norm{(\varrho, u) (t) }_{L^\infty}\le C (1+t)^{-\frac 32},\\
		\norm{\nabla (\varrho, u) (t) }_{L^\infty}\le C(1+t)^{-2}.
\end{equation*}

We first attempt to establish conditional energy estimates under the above time-decay regularity criterion, together with the $L^2$-smallness assumption on the initial data \eqref{initial smallness} and the viscosity constraint \eqref{alphasmall}. It is notable that the classical energy estimate is proved by constructing a family of energy functionals coupled with each other, due to the lack of a dissipative estimate for the density as the continuity equation is hyperbolic (see, e.g. Guo-Wang \cite{2012 Guo-Wang-CPDE}). Consequently, it requires the smallness of the initial data in the Sobolev norm ($H^{[\frac N2]+2}$ for $N>3$).

In contrast to the classical approach, our conditional energy estimate needs only $L^2$-smallness of the initial data, with no smallness condition is imposed on the derivatives. This is because the time-decay regularity criterion enables us to close energy inequality at each $l-$th level.  In this process, the following nearly-linear condition 
$$|\alpha - 1| \le \delta(\mu),$$
for the viscosity  $\mu(\rho )=\mu \rho^\alpha, \lambda(\rho)=\lambda \rho^\alpha $
 also plays a significant role in absorbing the higher-order dissipative terms for the velocity, since the linear expansion of $H_{\alpha}(\rho)\propto (\alpha-1)$, which acts as extra coefficients on the dissipation terms.

With the conditional energy estimates at hand, we are able to close the time-decay regularity criterion by the Duhamel formula \eqref{Duhamel}, using the regularity estimates (see Lemma \ref{regularity criterion}) of the Green function as well as the a priori optimal decay estimates. Careful decomposition and interpolation techniques are utilized in this process. Particularly, a $L^{1+}$ axuiliary estimate (where we choose as $L^{\frac 43}$) is introduced to obtain the optimal uniform decay estimate \eqref{optimal decay}, since the $L^1$ estimate is not expected as we addressed in the Remark \ref{rmk 1.1}. It is also noteworthy that to close the a priori assumption \eqref{priori estimate}, the singular part of the Green's function requires the estimates of $\|\nabla^2 (\varrho,u)\|_{L^1L^\infty}$ for the solution, while the second-order nonlinear terms involving $\nabla^2 u$ additionally demand higher regularity of the solution. 

Finally, based on the uniform decay estimates established in Section \ref{sec-3}, we obtain the global existence and optimal decay rates of the solution by employing the standard continuity argument and the Green’s function method in Section \ref{sec-4}.

The rest of the paper is organized as follows.

In Section \ref{sec-2}, we formulate the problem using the Green's function and provide a frequency analysis of its different components. In Section \ref{sec-3}, we establish both basic and higher-order energy estimates, design a bootstrap scheme, and close it using these estimates along with the regularity properties of the Green's function. In Section \ref{sec-4}, we verify the global well-posedness and derive the $H^2$-optimal decay estimates, thereby completing the proof of the main theorem.

\section{Preliminary}
\label{sec-2}

For convenience in the nonlinear analysis, we express system \eqref{linearized system} in operator form:
\begin{equation*}
	\begin{split}
		\partial_t V+LV=N(V).
	\end{split}
\end{equation*}
Here we denote
\begin{equation*}
	V=\begin{pmatrix}
			\varrho \\ u
		\end{pmatrix}, L=\begin{pmatrix}
			0 & \div \\
			\nabla  & -(\mu \Delta +(\mu +\lambda)\nabla \div  )\Id_{n\times n}
		\end{pmatrix}, N(V)=\begin{pmatrix}
			N_\varrho (V)\\
			N_u (V)
		\end{pmatrix}.
\end{equation*}
Clearly, $L$ is a self-adjoint operator and can be characterized by its symbol. According to the Duhamel principle, the solution can be expressed as follows:
\begin{equation}
\label{Duhamel}
\begin{split}
	V(t)=&e^{-tL}V_0 +\int_0^t e^{-(t-s)L} N(V) (s)ds= G(t)*V_0 +\int_0^t G(t-s)*N(V)(s)ds.
\end{split}
\end{equation}
Here we have $ \Fi({ e^{-tL}f})=\hat G(t)\hat f.$

For completeness, we provide a sketch of the analysis of the Green function $G(t,x)$. We aim to compute its Fourier transform $\hat G(t,\xi)$ and derive regularity estimates via frequency analysis. For more details on the linear analysis, please refer to \cite{2010 Chen-Miao-Zhang-CPAM, 1995 Hoff-Zumbrun-IUMJ, 1997 Hoff-Zumbrun-ZAMP, 2024 Qi-Wang-DCDS}.

\begin{enumerate}[wide, labelwidth=!, labelindent=0pt]
\item \textbf{ Explicit form of Green function.} 
To analyze the density component of the Green function, a key observation is that we can derive the following wave-type equation from the linear part of \eqref{linearized system}:
\begin{equation*}
	\varrho_{tt}-\nabla \varrho -(2\mu+\lambda)\Delta \varrho_t=0.
\end{equation*}
In the frequency form, it can be written as the following second-order ODE:
\begin{equation}
\label{ODE}
	\frac d{dt} \begin{pmatrix}
		\hat \varrho \\ \hat \varrho_t
	\end{pmatrix}
	=\begin{pmatrix}
		0 & 1\\
		-|\xi |^2 & -(2\mu+\lambda ) |\xi |^2
	\end{pmatrix}
	\begin{pmatrix}
		\hat \varrho \\ \hat \varrho_t
	\end{pmatrix}.
\end{equation}
The the general solution of \eqref{ODE} is
\begin{equation*}
	\begin{split}
		\hat \varrho(t)=A(\xi )e^{\lambda_-(\xi)t }+B(\xi)e^{\lambda_+(\xi)t},
	\end{split}
\end{equation*}
where the eigenvalues $\lambda_\pm$ is determined by the following characteristic equation
\begin{equation*}
\begin{split}
	&\lambda^2+(2\mu+\lambda ) |\xi |^2\lambda+ |\xi|^2=0.\\
	\end{split}
\end{equation*}
We can explicitly solve for the eigenvalues as
\begin{equation*}
	\lambda_{\pm} (\xi)=\frac{-\nu |\xi|^2\pm \sqrt{\nu^2|\xi|^4-4|\xi|^2 } }{2},
\end{equation*}
where $\nu =(2\mu+\lambda)$. And the coefficients $A(\xi), B(\xi)$ are determined by the initial data:
\begin{equation*}
	\begin{dcases}
		A(\xi)+B(\xi)=\hat \varrho_0,\\
		\lambda_-A(\xi)+\lambda_+(\xi)B(\xi)=-i\xi  \hat u_0,
	\end{dcases}
\end{equation*}
which can be solved as
\begin{equation*}
\label{coefficients}
	\begin{dcases}
		A(\xi )=\frac{\lambda_+\hat \varrho+i\xi \hat u_0 }{\lambda_+-\lambda_- },\\
		B(\xi )=-\frac{\lambda_-\hat \varrho+i\xi \hat u_0 }{\lambda_+-\lambda_- }.
	\end{dcases}
\end{equation*}
By combining the explicit form of coefficients and eigenvalues,  we obtain the following expression for the density:
\begin{equation}
\label{expression rho}
\begin{split}
	\hat \varrho(t)
	=& \frac{\lambda_+e^{\lambda_-t }-\lambda_-e^{\lambda_+t } }{\lambda_+-\lambda_- }\hat \varrho_0+i\xi \frac{e^{\lambda_-t }-e^{\lambda_+t } }{\lambda_+-\lambda_- }\hat u_0.
\end{split}
\end{equation}
Moreover, after the Fourier transform, the linear part of momentum equation becomes
\begin{equation}
\label{equation u}
	 \hat u_t +i \xi \hat \varrho+ (\mu|\xi |^2 I+(\mu+\lambda )\xi  \xi^T  )\hat u=0.
\end{equation}
Since it is a multi-dimensional ODE, we decompose the momentum equation orthogonally with respect to the frequency direction. Denote $\xi=|\xi|\xi'$, then we have
\begin{equation*}
	\hat u =(\hat u\cdot \xi' )\xi'+(\hat u-(\hat u\cdot \xi' )\xi' )\coloneqq a\xi'+b, a\xi'\parallel \xi, b\perp \xi. 
\end{equation*}
The first and third terms are parallel to $\hat{u}$, which justifies the use of the above decomposition. The second and fourth terms are parallel to $\xi$.
\begin{equation*}
\begin{split}
	i\xi \varrho= i|\xi |\varrho \xi', 	\quad (\mu+\lambda )\xi\xi^T \hat u
	 = (\mu +\lambda )|\xi|^2a\xi'.
\end{split}
\end{equation*}
As a result, the multi-dimensional ODE can be decomposed into two scalar ODEs:
\begin{equation*}
	\begin{dcases}
		\tilde \varrho a_t +(2\mu+\lambda )|\xi |^2 a+i|\xi |\hat \varrho=0,\\
		\tilde \varrho b_t +\mu  |\xi |^2b=0,\\
		a_0=\hat u_0\cdot \xi', b_0=\br{I-\frac{\xi\xi^T }{|\xi |^2 }}\hat u_0 .
	\end{dcases}
\end{equation*}
Using the above results and standard ODE techniques, we can derive that
\begin{equation*}
	\begin{dcases}
	a=-i |\xi|\frac{e^{\lambda_+ t }-e^{\lambda_-t } }{\lambda_+-\lambda_- }  \hat \varrho_0+\frac{\lambda_+e^{\lambda_+ t }-\lambda_-e^{\lambda_-t }  }{\lambda_+-\lambda_- } (u_0\cdot \xi' ),\\
		b=e^{-\mu|\xi |^2t/\tilde \varrho}\br{I-\frac{\xi\xi^T }{|\xi |^2 }}\hat u_0.
	\end{dcases}
\end{equation*}
The solution of momentum equation \eqref{equation u} can then be expressed as
\begin{equation}
\label{expression u}
	\hat u=-i \frac{e^{\lambda_+ t }-e^{\lambda_-t } }{\lambda_+-\lambda_- }  \hat \varrho_0\xi+\br{e^{-\mu |\xi|^2 t }I-\br{\frac{\lambda_+e^{\lambda_+ t }-\lambda_-e^{\lambda_-t }  }{\lambda_+-\lambda_- }-e^{-\mu|\xi|^2t }}\xi'\xi'^T }\hat u_0.  
\end{equation}

Summarizing the above results \eqref{expression rho} and \eqref{expression u}, we derive the explicit form of the Green function in the frequency level.
\begin{lema}
	Suppose the Green function $G(t,x)$ of the linearized Navier–Stokes equations \eqref{linearized system} is given by formula \eqref{Duhamel}. Then its Fourier transform $\hat{G}(t,\xi)$ can be expressed as follows:
	\begin{equation}
    \label{Green function}
		\begin{split}
			\hat G(t,\xi)=\begin{pmatrix}
			\frac{\lambda_+e^{\lambda_- t}-\lambda_-e^{\lambda^+ t} }{\lambda_+-\lambda_- } & -i  \frac{e^{\lambda_+ t}-e^{\lambda_- t} }{\lambda_+-\lambda_- } \xi^T\\
			-i  \frac{e^{\lambda_+ t}-e^{ \lambda_- t} }{\lambda_+-\lambda_- } \xi &\ \ \  e^{-\mu|\xi|^2t }(I-\xi' \xi'^T)+\frac{\lambda_+e^{\lambda_- t}-\lambda_-e^{\lambda_+ t} }{\lambda_+-\lambda_- } \xi'\xi'^T
		\end{pmatrix},
		\end{split}
	\end{equation}
	where the eigenvalues $\lambda_{\pm}$ is defined by
	\begin{equation}
    \label{eigenvalues}
		\lambda_{\pm} (\xi)=\frac{-\nu |\xi|^2\pm \sqrt{\nu^2|\xi|^4-4|\xi|^2 } }{2}, \nu =(2\mu+\lambda).\\
	\end{equation}
\end{lema}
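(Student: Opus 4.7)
The plan is to solve the Fourier-transformed linearized Cauchy problem $\partial_t \hat V + \hat L(\xi)\hat V = 0$ with generic initial data $(\hat \varrho_0, \hat u_0)$, and then identify $\hat G(t,\xi)$ as the fundamental matrix sending $(\hat\varrho_0,\hat u_0)$ to $(\hat\varrho(t),\hat u(t))$. Since $\hat L(\xi)$ is a constant matrix for each fixed $\xi$, the task reduces to computing $\exp(-t\hat L(\xi))$ explicitly; the four blocks of the $4\times 4$ matrix in \eqref{Green function} are then read off by collecting coefficients of $\hat\varrho_0$ and of the three components of $\hat u_0$.

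First I would handle the density by differentiating the continuity equation in $t$ and substituting the momentum equation to eliminate $\hat u$, producing a scalar second-order ODE whose characteristic polynomial is exactly $\lambda^2 + \nu|\xi|^2\lambda + |\xi|^2 = 0$ with roots $\lambda_\pm(\xi)$ as in \eqref{eigenvalues}. Writing the general solution as $A(\xi)e^{\lambda_- t} + B(\xi)e^{\lambda_+ t}$ and matching the initial data $\hat\varrho(0) = \hat\varrho_0$ together with $\hat\varrho_t(0) = -i\xi\cdot \hat u_0$ (the latter coming from the continuity equation evaluated at $t = 0$) gives a $2\times 2$ linear system for $A,B$, whose solution yields the explicit formula \eqref{expression rho}. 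This determines the first row of $\hat G$.

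For the velocity, the key observation is the rotational anisotropy of the symbol: the pressure gradient $i\xi\hat\varrho$ forces only the longitudinal mode, whereas the transverse modes decouple into pure heat equations. I would decompose $\hat u = a\xi' + b$ with $\xi' = \xi/|\xi|$ and $b\perp\xi'$; substitution into \eqref{equation u} reduces it to the scalar ODEs $b_t + \mu|\xi|^2 b = 0$ and $a_t + \nu|\xi|^2 a + i|\xi|\hat\varrho = 0$. The transverse part integrates immediately to $b(t) = e^{-\mu|\xi|^2 t}(I - \xi'\xi'^T)\hat u_0$. For the longitudinal coefficient, I would either use Duhamel with the known expression for $\hat\varrho(t)$ or, more cleanly, observe that $a$ satisfies the same second-order ODE as $\hat\varrho_t$ and determine it by matching initial conditions, yielding \eqref{expression u}. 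Reassembling $\hat u = a\xi' + b$ and collecting coefficients produces the off-diagonal block $-i(e^{\lambda_+ t} - e^{\lambda_- t})(\lambda_+ - \lambda_-)^{-1}\xi$ and the $3\times 3$ velocity block.

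No deep obstacle is expected here, since this is a constant-coefficient linear ODE computation; the main care lies in the bookkeeping. I would finally verify $\hat G(0,\xi) = \Id_{4\times 4}$ — which follows automatically from the initial conditions imposed above — and check the block structure against the rotational symmetry of $\hat L(\xi)$. The clean splitting of the velocity block into the transverse projection $I - \xi'\xi'^T$ carrying the heat weight $e^{-\mu|\xi|^2 t}$ and the longitudinal projection $\xi'\xi'^T$ carrying the compressible weight (the same weight appearing in the density entry) is the crucial feature to isolate, as it underlies the later frequency decomposition $G = G_L + G_{HR} + G_{HS}$ used in the nonlinear analysis.
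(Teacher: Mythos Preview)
Your proposal is correct and follows essentially the same route as the paper: derive the second-order scalar ODE for $\hat\varrho$ with characteristic roots $\lambda_\pm$, match the initial data $\hat\varrho(0)=\hat\varrho_0$ and $\hat\varrho_t(0)=-i\xi\cdot\hat u_0$ to obtain \eqref{expression rho}, then split $\hat u$ into longitudinal and transverse parts along $\xi'$, solve the resulting scalar ODEs, and reassemble to get \eqref{expression u}. The only cosmetic difference is that the paper solves the longitudinal coefficient $a$ directly rather than invoking Duhamel, but the computation is identical.
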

\item \textbf{Frequency analysis of Green function.} To fully exploit the stability mechanism of the linear operator, we consider the low-frequency and high-frequency parts separately.

If $|\xi| \ll 1$, we expand the Green function around $|\xi| = 0$. Then the eigenvalues $\lambda_{\pm}$ given by \eqref{eigenvalues} have the following asymptotic expansion:
\begin{equation*}
	\begin{split}
		\lambda_+(\xi)=i|\xi|-\frac{\nu}{2}|\xi |^2 +O (|\xi|^3 ), \lambda_-(\xi)=-i|\xi |-\frac{\nu }{2}|\xi |^2+O( |\xi |^3 );\\
		e^{\lambda_+t}(\xi)=e^{i|\xi |t }e^{-\frac{\nu }{2}|\xi |^2t } e^{O(|\xi |^3 )t }, e^{\lambda_+t}(\xi)=e^{-i|\xi |t }e^{-\frac{\nu }{2}|\xi |^2t } e^{O(|\xi |^3 )t };\\
		\lambda_+-\lambda_-(\xi)=2i|\xi |+O(|\xi|^3 ), \frac{1}{\lambda_+-\lambda_- }(\xi)=-\frac i 2|\xi |^{-1}+O(|\xi | );\\
		\frac{\lambda_+ }{\lambda_+-\lambda_- }(\xi)=\frac 12 +i O( |\xi | ), \frac{\lambda_- }{\lambda_+-\lambda_- }(\xi)=-\frac 12 +i O( |\xi | ).
	\end{split}
\end{equation*}
Utilizing the above asymptotic analysis and explicit form \eqref{Green function} of Green function $G(t,x)$, we find that it behaves like the heat kernel in the low-frequency regime.

If $|\xi| \gg 1$, we now expand the Green function around $|\xi|^{-1} \to 0$. Then
\begin{equation*}
	\begin{split}
		\lambda_+(\xi)=-1+O(|\xi|^{-2}), \lambda_-(\xi)=-|\xi|^{-2}+1+O(|\xi|^{-2});\\
		\frac{\lambda_+(\xi)}{\lambda_+(\xi)-\lambda_-(\xi) } e^{\lambda_+(\xi)t }=e^{-t}O(|\xi|^{-2});\\
		 \frac{\lambda_-(\xi)}{\lambda_+(\xi)-\lambda_-(\xi)}e^{\lambda_+(\xi)t}=e^{-t}(-1+O(|\xi|^{-2}));\\
		\frac{1}{\lambda_+(\xi)-\lambda_-(\xi) }e^{\lambda_+(\xi)t}=e^{-t}O(|\xi|^{-2}).
	\end{split}
\end{equation*}
We can see that the singularity occurs in the first term of the Green function:
\begin{equation*}
	\begin{split}
		\frac{\lambda_+(\xi)e^{\lambda_-(\xi) t}-\lambda_-(\xi)e^{\lambda^+(\xi) t} }{\lambda_+(\xi)-\lambda_-(\xi) }\sim  {e^{-c t}}-|\xi |^{-2}e^{-c' |\xi |^2 t }.
	\end{split}
\end{equation*}
Inspiring by the above analysis, we take decomposition as follows:
\begin{equation*}
	G= G_L+G_{HR}+G_{HS}.
\end{equation*}
Here $G_L$ denotes the low frequency part of the Green function, defined by
\begin{equation}
\label{GL}
	G_L=\chi (D)G,\quad \chi(\xi)\coloneqq \begin{dcases}
		1, & |\xi|\le \frac 12,\\
		0, & |\xi|>1,
	\end{dcases}
\end{equation}
and $G_{HR}$ and $G_{HS}$ denote the regular and singular parts of the Green function in the high-frequency regime:
\begin{equation}
\label{GR GS}
	G_{HS}=e^{-ct}\delta(x)\begin{pmatrix}
		1-\chi(D) & 0 & 0\\
		0 & 0 & 0 \\
		0 & 0 & 0
	\end{pmatrix},\quad
	G_{HR}=(1-\chi (D))G-G_{HS}.
\end{equation}
We have following standard estimates for the Green function. More details of the proof can be found in  \cite{2010 Chen-Miao-Zhang-CPAM, 1997 Hoff-Zumbrun-ZAMP, 2024 Qi-Wang-DCDS}. 

\end{enumerate}

\begin{lema}
\label{regularity criterion}
	Suppose $G(x,t)$ is the Green function of the linearized Navier–Stokes equations \eqref{linearized system}, and that $G_L$, $G_{HR}$, and $G_{HS}$ are defined by \eqref{GL} and \eqref{GR GS}, respectively. 
	\begin{enumerate}
		\item For any $ k\ge 0 $ and $ p\in [1,\infty]$, we have:
		\begin{equation}
\label{low frequency}
	\begin{split}
		\norm{D^k G_L(\cdot,t) }_{L^p}\le & C(1+t)^{-\frac{3}{2}(1-\frac 1p )-\frac k2}.\\
	\end{split}
\end{equation}
	\item For any $k=0,1 $ and $ p\in [1,\infty], $ we have:
	\begin{equation}
\label{regular high}
		\norm{D^k G_{HR} (\cdot,t)}_{L^p}\le Ce^{-Ct}.
\end{equation}
\item For any $p\in [1,\infty]$ and $f\in L^p(\R^3)$, we have:
\begin{equation}
\label{singular}
	\norm{G_{HS}*f }_{L^p}\le Ce^{-Ct} \norm{f}_{L^p}.
\end{equation}
	\end{enumerate}
\end{lema}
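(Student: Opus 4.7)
The plan is to prove each of the three estimates by combining the explicit symbol \eqref{Green function} with the asymptotic expansions at $|\xi|\to 0$ and $|\xi|\to\infty$ already displayed above. My principal tools are Plancherel's theorem at $L^2$, direct oscillatory-integral / heat-kernel bounds at $L^\infty$, Riesz--Thorin interpolation for intermediate $p$, and an explicit wave--heat factorization of the kernel at the $L^1$ endpoint.

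For part (1), I would decompose $G_L$ into a finite sum of pieces of the form $m_\pm(\xi)\,e^{\pm i|\xi|t}e^{-\nu|\xi|^2 t/2}\chi(\xi)$ and $m_0(\xi)\,e^{-\mu|\xi|^2 t}\chi(\xi)$, where each $m_\cdot$ is $C^\infty$ and uniformly bounded near the origin; off-diagonal factors of $i\xi$ cost one derivative, to be absorbed into the $D^k$ of the statement. Plancherel gives
\[
\|D^k G_L(\cdot,t)\|_{L^2}^2 \le C\int_{|\xi|\le 1} |\xi|^{2k}\,e^{-\nu|\xi|^2 t}\,d\xi \le C(1+t)^{-3/2-k},
\]
while the inverse Fourier transform of $|\xi|^k m(\xi)\,e^{\pm i|\xi|t-\nu|\xi|^2 t/2}\chi(\xi)$ is bounded in $L^\infty$ by $C(1+t)^{-3/2-k/2}$ through a standard stationary-phase / damped-wave argument. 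Riesz--Thorin interpolation then covers $p\in[2,\infty]$, and interpolation against the $L^1$ estimate described below closes $p\in[1,2]$.

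For part (2), the construction $G_{HR}=(1-\chi(D))G-G_{HS}$ is designed so that the non-integrable piece of $(1-\chi(D))G$ responsible for the Dirac mass in the density block is precisely what is absorbed into $G_{HS}$. The high-frequency expansions $\lambda_+\sim -1/\nu$, $\lambda_-\sim -\nu|\xi|^2$ then show that each remaining symbol of $\widehat{G_{HR}}$, together with one derivative in $\xi$, splits into a $|\xi|^{-2}e^{-ct}$ piece and a piece with Gaussian decay in $|\xi|$, all carrying a common exponential prefactor $e^{-ct}$. Consequently $D^k\widehat{G_{HR}}(\cdot,t)$ is controlled in $L^1(\R^3_\xi)$ by $Ce^{-ct}$ uniformly in $t$, whence Hausdorff--Young gives the $L^\infty$ bound, Plancherel gives the $L^2$ bound, and interpolation supplemented by a kernel estimate in $x$ yields the full range $p\in[1,\infty]$ for $k=0,1$; the detailed bookkeeping follows the analogous analyses of \cite{2010 Chen-Miao-Zhang-CPAM,1997 Hoff-Zumbrun-ZAMP,2024 Qi-Wang-DCDS}. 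For part (3), $G_{HS}\ast$ is multiplication on the Fourier side by $e^{-ct}(1-\chi(\xi))$ composed with projection onto the density component; since the inverse Fourier transform of $1-\chi$ equals $\delta-\check\chi$ with $\check\chi$ Schwartz, Young's inequality immediately yields $\|G_{HS}\ast f\|_{L^p}\le Ce^{-ct}\|f\|_{L^p}$ for every $p\in[1,\infty]$.

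The main obstacle is the $L^1$ endpoint in part (1): Plancherel is unavailable and Mikhlin--H\"ormander does not reach $L^1$, yet the statement requires uniform $L^p$ bounds including $p=1$. I would resolve this by the explicit factorization of each low-frequency oscillatory piece as the convolution (in $x$) of a smoothed spherical-wave kernel of uniformly bounded $L^1$ mass with a Gaussian of width $\sqrt{t}$ of unit $L^1$ mass, so that Young's inequality produces the $L^1$ bound directly. Tracking the $|\xi|^k$ factor through this factorization accommodates every $k\ge 0$, and the residual technicalities are carried out in \cite{2010 Chen-Miao-Zhang-CPAM,1997 Hoff-Zumbrun-ZAMP,2024 Qi-Wang-DCDS}, to which I would ultimately refer for the routine computations.
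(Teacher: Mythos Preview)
The paper does not supply its own proof of this lemma: it merely states the estimates as standard and refers the reader to \cite{2010 Chen-Miao-Zhang-CPAM,1997 Hoff-Zumbrun-ZAMP,2024 Qi-Wang-DCDS} for details. Your sketch is a faithful outline of the arguments in those references (in particular the wave--heat factorization for the $L^1$ endpoint in part~(1) is exactly the Hoff--Zumbrun approach), so your proposal and the paper's treatment coincide.
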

\begin{rmk}
	Compared to the singular part, $G_{HR}$ can bear a first derivative.
\end{rmk}

\section{ A priori estimates}\label{sec-3}
This section establishes uniform a priori estimates for the system \eqref{CNS}. Fix $T > 0$, and let $(\varrho, u)$ be a smooth solution on $\mathbb{R}^3 \times (0, T]$ with initial data $(\varrho_0, u_0)$ satisfying the prescribed regularity conditions. Below, we derive essential a priori estimates for this solution.
\begin{prop}\label{pr11}
		Assume that the conditions of Theorem \ref{main theorem} hold. There exists a positive constant $\eta$ depending on $\lambda,\mu,\gamma$ such that if $(\varrho,u)$ is a smooth solution to the system \eqref{CNS} on $\mathbb{R}^3 \times (0,T]$ satisfying 
\begin{equation}\label{priori assumption}
\begin{gathered}
\|(\varrho, u)\|_{L^\infty} \leq \frac{1}{2}(1+t)^{-\frac{3}{2}}, \quad \|\nabla(\varrho, u)\|_{L^\infty} \leq \eta(1+t)^{-2},
\end{gathered}
\end{equation}
then the following estimates hold:
\begin{equation}\label{priori estimate}
\begin{gathered}
\|(\varrho, u)\|_{L^\infty} \leq \frac{1}{4}(1+t)^{-\frac{3}{2}}, \quad \|\nabla(\varrho, u)\|_{L^\infty} \leq \frac{\eta}{2}(1+t)^{-2}.
\end{gathered}
\end{equation}
	\end{prop}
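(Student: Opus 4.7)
The plan is a closed bootstrap argument based on the Duhamel representation \eqref{Duhamel}, the Green function splitting $G = G_L + G_{HR} + G_{HS}$ and Lemma \ref{regularity criterion}, coupled to conditional energy estimates that exploit the a priori decay \eqref{priori assumption}. The idea is: first, use the a priori assumption to close $H^4$ energy estimates (without any smallness on derivatives); second, feed those bounds together with the auxiliary $L^{4/3}$--estimate into the Duhamel formula, applied at the $L^\infty$ level on $V$ and $\nabla V$, to improve the constants $\tfrac 12$ and $\eta$ to $\tfrac 14$ and $\tfrac{\eta}{2}$.

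First I would establish the conditional energy estimates. For $k = 0,1,2,3,4$, differentiating \eqref{linearized system} and testing against $\nabla^k \varrho$ and $\nabla^k u$ (with the standard Bresch--Desjardins type symmetrization for the density), the key observation is that the a priori bounds \eqref{priori assumption} give
\[
\int_0^t \br{\|(\varrho,u)\|_{L^\infty}+ \|\nabla(\varrho,u)\|_{L^\infty}}\,ds \lesssim 1,
\]
so all transport-type nonlinear terms in $N_\varrho$ and $N_1$ can be handled by Gr\"onwall without requiring smallness of $\|(\varrho_0,u_0)\|_{H^k}$; only the $L^2$-smallness is needed. The pressure nonlinearity $N_2$ is handled since $H_\gamma(\varrho)\sim \varrho$ under the smallness $\|\varrho\|_{L^\infty}\le \tfrac12$. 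The main algebraic point is the viscous nonlinearity $N_4=H_\alpha(\varrho)(\mu \Delta u+(\mu+\lambda)\nabla\div u)$: since $H_\alpha(\varrho)\sim (\alpha-1)\log(1+\varrho)+O((\alpha-1)^2)$ is proportional to $(\alpha-1)$ in its principal part, the hypothesis $|\alpha-1|\le \delta$ from \eqref{alphasmall} allows one to absorb the highest-order viscous terms into the left-hand side. This produces uniform bounds $\|(\varrho,u)\|_{H^4}\le C$ together with the dissipation $\nabla u\in L^2(0,T;H^4)$.

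Second I would run the Duhamel estimates. Since $L^1$ is not preserved (Remark \ref{rmk 1.1}), I work with the initial $L^1$-smallness of $V_0$ but only an evolving $L^{4/3}$-estimate for the Duhamel term. Writing $V(t) = (G_L+G_{HR}+G_{HS})(t)*V_0 + \int_0^t(G_L+G_{HR}+G_{HS})(t-s)*N(V)(s)\,ds$, estimate \eqref{low frequency} gives $\|G_L*V_0\|_{L^\infty}\lesssim (1+t)^{-3/2}\|V_0\|_{L^1}$ and \eqref{regular high}, \eqref{singular} handle $G_{HR}, G_{HS}$ together with the $H^4$-bound on $V_0$. For the nonlinear integral, split each piece of $N(V) = -\nabla\cdot(\varrho u) + N_1+N_2+N_3+N_4$ by the product rule and bound one factor in $L^\infty$ using \eqref{priori assumption} and the other in $L^{4/3}$ or $L^2$ using the conditional energy estimates and interpolation, so that the resulting time-convolutions are of the form $\int_0^t (1+t-s)^{-\sigma}(1+s)^{-\tau}\,ds\lesssim (1+t)^{-3/2}$. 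Repeating the same argument with $\nabla$ applied to the Duhamel formula, using $\|\nabla G_L\|$ decay $(1+t)^{-3/2(1-1/p)-1/2}$, I obtain the improved rate $(1+t)^{-2}$ for $\|\nabla V\|_{L^\infty}$. Picking $C_0$ and $\delta$ (and then $\eta$) small enough, the constants come out as $\tfrac14$ and $\tfrac{\eta}{2}$.

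The main obstacle I foresee is the contribution of the singular part $G_{HS}$ to $\nabla V$. Since $G_{HS}\sim e^{-ct}\delta(x)$ acts only on $N_\varrho=-\nabla\cdot(\varrho u)$, differentiating yields $\int_0^t e^{-c(t-s)}\|\nabla^2(\varrho u)(s)\|_{L^\infty}\,ds$, which forces control on $\|\nabla^2(\varrho,u)\|_{L^\infty}$. This is achieved via Gagliardo--Nirenberg interpolation between \eqref{priori assumption} and the $H^4$ energy bound from step one, so $\|\nabla^2 V\|_{L^\infty}\lesssim \|\nabla V\|_{L^\infty}^{1/2}\|\nabla^4 V\|_{L^\infty}^{1/2}$, etc., producing an integrable-in-time factor of $(1+s)^{-1}$, which combined with $e^{-c(t-s)}$ closes the estimate. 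This is precisely the reason $H^4$ regularity (rather than merely $H^2$) is pushed through the energy argument in the previous step, and why the combination of the regularity criterion \eqref{priori assumption} with the full energy hierarchy is indispensable.
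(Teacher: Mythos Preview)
Your proposal follows exactly the paper's two–stage strategy: conditional $H^4$ energy estimates closed by Gr\"onwall under the decay hypothesis \eqref{priori assumption} together with $|\alpha-1|\le\delta$ to absorb $N_4$, then the auxiliary $L^{4/3}$ bound and the Duhamel bootstrap through $G=G_L+G_{HR}+G_{HS}$. You have also correctly singled out the $G_{HS}$ contribution to $\nabla V$ as the bottleneck, requiring $\|\nabla^2(\varrho u)\|_{L^\infty}$.

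There is, however, a quantitative slip at precisely that point which, as written, would prevent the bootstrap from closing. First, the interpolation you propose, $\|\nabla^2 V\|_{L^\infty}\lesssim \|\nabla V\|_{L^\infty}^{1/2}\|\nabla^4 V\|_{L^\infty}^{1/2}$, is unavailable: the energy step only yields $\nabla^4 V\in L^2$, not $L^\infty$. The correct Gagliardo--Nirenberg form is $\|\nabla^2 V\|_{L^\infty}\le C\|\nabla V\|_{L^\infty}^{1/3}\|\nabla^4 V\|_{L^2}^{2/3}\le C(1+t)^{-2/3}$. Second, your conclusion that $\|\nabla^2(\varrho u)\|_{L^\infty}\lesssim (1+s)^{-1}$ is both inaccurate and, more importantly, insufficient: convolving $e^{-c(t-s)}$ against $(1+s)^{-1}$ produces only $(1+t)^{-1}$, not the $(1+t)^{-2}$ required in \eqref{priori estimate}. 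In fact the product structure gives
\[
\|\nabla^2(\varrho u)\|_{L^\infty}\lesssim \|V\|_{L^\infty}\|\nabla^2 V\|_{L^\infty}+\|\nabla V\|_{L^\infty}^2\lesssim (1+s)^{-13/6}+(1+s)^{-3},
\]
which does close. To obtain the factor $\tfrac{\eta}{2}$ (rather than a large constant), the paper further extracts a small power of $C_0$ from one copy of $\|\nabla V\|_{L^\infty}$ via the alternative interpolation $\|\nabla V\|_{L^\infty}\le C\|V\|_{L^\infty}^{1/3}\|\nabla^3 V\|_{L^2}^{2/3}$, using the already improved bound $\|V\|_{L^\infty}\le CC_0^{1/4}(1+t)^{-3/2}$ from the first bootstrap step. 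With these corrections your outline is complete and matches the paper.
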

In the following discussion, we will first establish conditional energy estimates, and then use them to close the regularity criterion.

\subsection{Conditional energy estimate}

In this subsection, under the assumptions of time-decaying regularity criteria and small initial data in $L^2$, we derive uniform energy estimates for the solution and its derivatives. To ensure the global existence of smooth solutions, it suffices to establish the following a priori estimates.

	The following lemma is necessary to handle the nonlinear terms involving the density:
	\begin{lema}
Assume that \(\|\varrho\|_{L^\infty} \leq 1\). Suppose $\varrho \in H^k(\R^3)$ and $f: \R\to \R$ is a smooth function with bounded derivatives up to order $k\geq1$, then for any $2\leq p\leq\infty$, the following estimate holds:
		\begin{equation}
		\label{density function}
			\|\nabla^kf(\varrho)\|_{L^p}\le C\norm{\nabla^k \varrho }_{L^p},
		\end{equation}
    where the constant $C>0$ depends on $\|f\|_{C^k}, p$ and $k.$
	\end{lema}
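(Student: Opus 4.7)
The plan is to prove this composition estimate by combining the Faà di Bruno formula with the Gagliardo--Nirenberg interpolation inequality. First, I would expand
\[
\nabla^k f(\varrho) = \sum_{j=1}^k f^{(j)}(\varrho) \sum_{\substack{a_1 + \cdots + a_j = k \\ a_i \ge 1}} c_{a_1,\ldots,a_j} \prod_{i=1}^j \nabla^{a_i} \varrho,
\]
where the $c_{a_1,\ldots,a_j}$ are explicit combinatorial coefficients. The scalar factor $f^{(j)}(\varrho)$ is bounded pointwise by $\|f\|_{C^k}$ since $\|\varrho\|_{L^\infty}\le 1$ and $j\le k$, so it can be pulled out of the $L^p$ norm of each summand.

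Next, I would estimate each multilinear product $\prod_{i=1}^j \nabla^{a_i}\varrho$ in $L^p$. Apply Hölder's inequality with exponents $p_i = kp/a_i$, which satisfy $\sum_i 1/p_i = (1/p)\sum_i a_i/k = 1/p$ by virtue of the composition constraint $\sum_i a_i = k$. To each factor, apply Gagliardo--Nirenberg in $\mathbb{R}^3$ with interpolation parameter $\theta_i = a_i/k \in (0,1)$ between $\nabla^k\varrho\in L^p$ and $\varrho\in L^\infty$, which gives
\[
\|\nabla^{a_i}\varrho\|_{L^{p_i}} \le C \|\nabla^k \varrho\|_{L^p}^{a_i/k} \|\varrho\|_{L^\infty}^{1-a_i/k}.
\]
Multiplying these $j$ estimates together and using $\sum_i a_i = k$ yields
\[
\Bigl\|\prod_{i=1}^j \nabla^{a_i}\varrho\Bigr\|_{L^p} \le C \|\nabla^k \varrho\|_{L^p} \, \|\varrho\|_{L^\infty}^{j-1} \le C \|\nabla^k \varrho\|_{L^p},
\]
where the last step uses the hypothesis $\|\varrho\|_{L^\infty}\le 1$.

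Summing over $j$ and the compositions and absorbing the pointwise bound on $f^{(j)}(\varrho)$ yields the claim with a constant $C$ depending only on $\|f\|_{C^k}$, $k$, and $p$. The endpoint $p = \infty$ is handled identically, with $p_i = \infty$ for all $i$ and Hölder reducing to the trivial product bound. I do not expect any serious obstacle: this is a classical Moser-type composition estimate, and the only point requiring minor care is checking that the Gagliardo--Nirenberg interpolation at $\theta_i = a_i/k$ with $s = \infty$ remains valid across the full range of exponents $(a_i, k, p)$ considered. The hypothesis $\|\varrho\|_{L^\infty}\le 1$ is precisely what lets the lower-order factors $\|\varrho\|_{L^\infty}^{1-a_i/k}$ be absorbed into the constant without requiring any further smallness assumption on $\varrho$.
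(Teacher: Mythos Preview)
Your proposal is correct and follows essentially the same route as the paper: Faà di Bruno expansion, Hölder with exponents $p_i = kp/a_i$, Gagliardo--Nirenberg interpolation between $\nabla^k\varrho\in L^p$ and $\varrho\in L^\infty$, and absorption of the $\|\varrho\|_{L^\infty}^{j-1}$ factor via the hypothesis $\|\varrho\|_{L^\infty}\le 1$. If anything, your version is more explicit about the choice of Hölder exponents and treats the $p=\infty$ endpoint separately, which the paper leaves implicit.
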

	\begin{proof}
    Notice that for \( k \geq 1 \),

\[
\nabla^k (f(\varrho)) = \text{a sum of products } g^{\gamma_1, \ldots, \gamma_n} (\varrho) \nabla^{\gamma_1} \varrho \cdots \nabla^{\gamma_n} \varrho,
\]
where the functions \( f^{\gamma_1, \ldots, \gamma_n} (\varrho) \) are some derivatives of \( f(\varrho) \) and \( 1 \leq \gamma_i \leq k \), \( i = 1, \ldots, n \) with \( \gamma_1 + \cdots + \gamma_n = k \). We then use the Sobolev interpolation inequality and H\"older inequality to obtain
		\begin{equation}\begin{aligned}  
\|\nabla^kf(\varrho)\|_{L^p}\leq&C\|\nabla^{\gamma_1}\varrho\cdot\cdot\cdot\nabla^{\gamma_n}\varrho\|_{L^p}\\
\leq&C\|\nabla^{\gamma_1}\varrho\|_{L^{p_1}}\cdot\cdot\cdot\|\nabla^{\gamma_n}\varrho\|_{L^{p_j}}\\
\leq&C\|\nabla^k\varrho\|_{L^p}^{\frac{p}{pk-3}\Big((\gamma_1+\cdot\cdot\cdot+\gamma_j)-3(\frac{1}{p_1}+\cdot\cdot\cdot+\frac{1}{p_j})\Big)}\|\varrho\|^{\zeta}_{L^\infty}\\
\leq&C\|\nabla^k\varrho\|_{L^p}\|\varrho\|^{\zeta}_{L^\infty},
\end{aligned}\end{equation} 
where $\frac{1}{p_1}+\cdot\cdot\cdot+\frac{1}{p_j}=\frac{1}{p}$ and $\zeta>0$ is a positive constant.
Consequently, the lemma holds provided that $\|\varrho\|_{L^\infty} \leq 1$.
\end{proof}
We also state the classical commutator estimate below, which will be used in the higher-order energy estimates.
\begin{lema}\cite{{A. Adams},{majda}}
Let $k \geq 1$ be an integer, then we have
\begin{equation}\begin{aligned}\label{iii}
\|\nabla^k(gh)\|_{L^r(\mathbb{R}^3)} \leq C\|g\|_{L^{r_1}(\mathbb{R}^3)}\|\nabla^kh\|_{L^{r_2}(\mathbb{R}^3)}+C\|h\|_{L^{r_3}(\mathbb{R}^3)}\|\nabla^kg\|_{L^{r_4}(\mathbb{R}^3)},
\end{aligned}\end{equation}
with $1 < r_1, r_2,r_3, r_4 < \infty$ and $r_i$ $(1 \leq i \leq 4)$ satisfy the following identity:
\[
\frac{1}{r_1}+\frac{1}{r_2}=\frac{1}{r_3}+\frac{1}{r_4}=\frac{1}{r}.
\]
Let $k\geq1$ be an integer and define the commutator
\begin{equation}\begin{aligned}\label{2iii}
[\nabla^k,f]g\overset{\text{def}}{=}\nabla^k(fg)-f\nabla^kg,
\end{aligned}\end{equation}
Then we have
\begin{equation}\begin{aligned}\label{1iii}
\|[\nabla^k,f]g\|_{L^r(\mathbb{R}^3)} \leq C\|\nabla g\|_{L^{r_1}(\mathbb{R}^3)}\|\nabla^{k-1}h\|_{L^{r_2}(\mathbb{R}^3)}+C\|h\|_{L^{r_3}(\mathbb{R}^3)}\|\nabla^kg\|_{L^{r_4}(\mathbb{R}^3)},
\end{aligned}\end{equation}
with $1 < r_1, r_2,r_3, r_4 < \infty$ and $r_i$ $(1 \leq i \leq 4)$ satisfy the following identity:
\[
\frac{1}{r_1}+\frac{1}{r_2}=\frac{1}{r_3}+\frac{1}{r_4}=\frac{1}{r}.
\]
\end{lema}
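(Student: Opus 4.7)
The plan is to establish both estimates through the classical Moser/Kato--Ponce approach: combine the Leibniz rule with Hölder's inequality and close the intermediate indices via the Gagliardo--Nirenberg interpolation inequality. Since the two inequalities are standard textbook results (as cited from Adams and Majda--Bertozzi), the proposal is really just a road map; the details are routine.

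For the product estimate \eqref{iii}, I would first expand
\begin{equation*}
\nabla^k(gh) \;=\; \sum_{j=0}^{k}\binom{k}{j}\,\nabla^{j}g\otimes \nabla^{k-j}h.
\end{equation*}
The two endpoint terms $j=0$ and $j=k$ are handled directly by Hölder's inequality with exponents $(r_1,r_2)$ and $(r_3,r_4)$, and they produce exactly the two pieces on the right-hand side of \eqref{iii}. For each intermediate term $1\le j\le k-1$, I would pair it with Hölder exponents $(q_j,q_j')$ satisfying $1/q_j+1/q_j'=1/r$ and then apply the Gagliardo--Nirenberg interpolation
\begin{equation*}
\|\nabla^{j}g\|_{L^{q_j}} \le C\,\|g\|_{L^{r_1}}^{1-j/k}\,\|\nabla^{k}g\|_{L^{r_4}}^{j/k},\qquad
\|\nabla^{k-j}h\|_{L^{q_j'}} \le C\,\|h\|_{L^{r_3}}^{j/k}\,\|\nabla^{k}h\|_{L^{r_2}}^{1-j/k},
\end{equation*}
where $q_j$ is chosen so the interpolation exponents balance (the identities $1/r_1+1/r_2=1/r_3+1/r_4=1/r$ guarantee such a choice exists). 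Young's inequality $ab\le a^p/p+b^{p'}/p'$ with $p=k/(k-j)$, $p'=k/j$ then absorbs the intermediate contribution into the two terms already present on the right-hand side.

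For the commutator estimate \eqref{1iii}, the key observation is that subtracting $f\nabla^{k}g$ precisely removes the $j=0$ term from the Leibniz expansion, so
\begin{equation*}
[\nabla^{k},f]g \;=\; \sum_{j=1}^{k}\binom{k}{j}\,\nabla^{j}f\otimes \nabla^{k-j}g.
\end{equation*}
Every surviving term carries at least one derivative on $f$, which is exactly what allows $\|\nabla f\|_{L^{r_1}}$ (rather than $\|f\|_{L^{r_1}}$) to appear on the right. The $j=k$ term yields $\|h\|_{L^{r_3}}\|\nabla^{k}g\|_{L^{r_4}}$ after Hölder (with $h$ and $f$ playing symmetric roles as written), the $j=1$ term yields $\|\nabla g\|_{L^{r_1}}\|\nabla^{k-1}h\|_{L^{r_2}}$, and for each $2\le j\le k-1$ I would interpolate $\nabla^{j}f$ between $\nabla f$ and $\nabla^{k}f$, and $\nabla^{k-j}g$ between $g$ and $\nabla^{k-1}g$, and close by Young's inequality as before.

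The only real obstacle is bookkeeping, namely verifying that for each $j$ the Gagliardo--Nirenberg scaling relation admits an admissible Hölder pair $(q_j,q_j')\in(1,\infty)^2$ with $1/q_j+1/q_j'=1/r$. The hypotheses $1<r_1,r_2,r_3,r_4<\infty$ together with the balance $1/r_1+1/r_2=1/r_3+1/r_4=1/r$ make this automatic. Since the argument is classical and fully contained in the references cited, I would present only this outline in the paper and refer the reader to \cite{A. Adams, majda} for the verification.
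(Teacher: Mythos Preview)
Your outline is the standard Leibniz--H\"older--Gagliardo--Nirenberg argument and is correct; the paper itself does not supply a proof at all but simply cites the references \cite{A. Adams, majda}, so your sketch is precisely the classical route those references contain. One minor remark: the lemma as stated in the paper has an apparent typographical inconsistency in \eqref{1iii} (the commutator is $[\nabla^k,f]g$ but the bound mixes $g$ and $h$), which you implicitly corrected in your write-up; it would be worth flagging that the intended reading is $\|\nabla f\|_{L^{r_1}}\|\nabla^{k-1}g\|_{L^{r_2}}+\|g\|_{L^{r_3}}\|\nabla^{k}f\|_{L^{r_4}}$.
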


A critical limitation in Guo-Wang’s analysis \cite{2012 Guo-Wang-CPDE} arises from the lack of density dissipation in equations \eqref{CNS}, which prevents the closure of energy estimates at each order. In contrast to such classical results, our approach exploits a time-decay regularity criterion (see \eqref{priori assumption}), enabling a unified framework where estimates are closed via Gronwall's inequality.

We begin by establishing the basic energy estimate, for which only $L^2$-smallness of the initial data is required—a significant relaxation compared to \cite{2012 Guo-Wang-CPDE}.
\begin{lema}\label{basicestimate}
		Under the assumption  \eqref{priori assumption},
		 there holds that
		\begin{equation}
		\label{basic energy estimate}
		\begin{aligned}
				\sup_{0\leq t\leq T}\Big(\|\varrho\|^2_{L^2}+\|u\|^2_{L^2}\Big)+\int_{0}^{T}\|\nabla u\|^2_{L^2}dt\leq CC_0,
		\end{aligned}\end{equation}
		where $C$ denotes generic positive constant depending only on $a,\lambda,\mu, \alpha,\gamma$.
	\end{lema}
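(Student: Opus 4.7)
The plan is a relative-entropy energy identity applied to the original system \eqref{CNS}, closed by exploiting the pointwise density bound supplied by \eqref{priori assumption}. I would introduce the functional
$$\mathcal{E}(t) := \int_{\R^3}\left[\tfrac{1}{2}\rho|u|^2 + \Psi(\rho)\right]dx, \qquad \Psi(\rho) := \frac{\rho^\gamma - 1 - \gamma(\rho-1)}{\gamma(\gamma-1)},$$
so that $\Psi\ge 0$ with $\Psi(1)=\Psi'(1)=0$ and $\Psi''(1)=1$. Testing the momentum equation in \eqref{CNS} against $u$, using the continuity equation to recast the pressure work as $-\int P\,\div u\,dx = \tfrac{d}{dt}\int\Psi(\rho)\,dx$ (where mass conservation of $\int\rho\,dx$ eliminates the linear-in-$\rho$ correction), and integrating by parts in the viscous stress $\T$, yields the exact balance
$$\frac{d}{dt}\mathcal{E}(t) + \int_{\R^3}\rho^\alpha\left[2\mu|Du|^2 + \lambda(\div u)^2\right]dx = 0.$$

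Next I would invoke \eqref{priori assumption} at $t=0$ and throughout $[0,T]$. The bound $\|\varrho\|_{L^\infty}\le\tfrac{1}{2}$ forces $\rho\in[\tfrac{1}{2},\tfrac{3}{2}]$, so by Taylor expansion of $\Psi$ around $\rho=1$ one has $\mathcal{E}(t)\sim \|\varrho\|_{L^2}^2+\|u\|_{L^2}^2$ and in particular $\mathcal{E}(0)\le C\|(\varrho_0,u_0)\|_{L^2}^2\le CC_0^2$. For the dissipation I would combine (i) the uniform bound $\rho^\alpha\ge(\tfrac{1}{2})^\alpha>0$, (ii) the pointwise algebraic inequality $2\mu|Du|^2+\lambda(\div u)^2\ge(2\mu+3\lambda)|Du|^2\ge 0$ (from $(\div u)^2\le 3|Du|^2$ and the assumption $2\mu+3\lambda\ge 0$), and (iii) the standard identity $\int|Du|^2\,dx = \tfrac{1}{2}\int(|\nabla u|^2+(\div u)^2)\,dx$ to extract a coercive lower bound $c\|\nabla u\|_{L^2}^2$ with $c=c(\mu,\lambda,\alpha)>0$. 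The $x$-dependence of $\rho^\alpha$ produces Korn-type fluctuation terms of schematic form $\int\nabla\rho^\alpha\cdot\nabla u\cdot u\,dx$, bounded by $\|\nabla\varrho\|_{L^\infty}\|\nabla u\|_{L^2}\|u\|_{L^2}\lesssim\eta(1+t)^{-2}\|\nabla u\|_{L^2}\|u\|_{L^2}$ using the a priori $\|\nabla\varrho\|_{L^\infty}\lesssim\eta(1+t)^{-2}$; Young's inequality absorbs $\tfrac{c}{2}\|\nabla u\|_{L^2}^2$ into the dissipation and leaves a time-integrable remainder $C\eta^2(1+t)^{-4}\|u\|_{L^2}^2$.

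Integrating on $[0,t]$ and applying Gronwall's inequality (noting $\int_0^\infty(1+s)^{-4}ds<\infty$) then gives
$$\|\varrho(t)\|_{L^2}^2+\|u(t)\|_{L^2}^2+c\int_0^t\|\nabla u\|_{L^2}^2\,ds\le C\mathcal{E}(0)\le CC_0^2\le CC_0$$
since $C_0\ll 1$. The main technical subtlety, compared with the constant-viscosity Guo--Wang argument, is precisely this coupling of the viscous dissipation with the fluctuating prefactor $\rho^\alpha$; the key virtue of our setting is that only pointwise a priori control of $\varrho$ and $\nabla\varrho$ with time-integrable decay is required, so no $H^s$-smallness of $\varrho_0$ is ever invoked at this stage, only $L^2$-smallness.
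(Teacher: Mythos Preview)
Your approach is correct and takes a genuinely different route from the paper. The paper works with the linearized form \eqref{linearized system}: it multiplies by $(\varrho,u)$ to obtain
\[
\tfrac{1}{2}\tfrac{d}{dt}\!\int(|\varrho|^2+|u|^2)\,dx + \mu\|\nabla u\|_{L^2}^2 + (\lambda+\mu)\|\div u\|_{L^2}^2 = \textstyle\sum_i I_i,
\]
estimates each nonlinear remainder $I_i$ using \eqref{priori assumption} (the term $I_5=\int H_\alpha(\varrho)[\mu\Delta u+(\lambda+\mu)\nabla\div u]\cdot u$ requiring an extra integration by parts), and closes via Gronwall with the time-integrable coefficient $\|\nabla(\varrho,u)\|_{L^\infty}\lesssim(1+t)^{-2}$. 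Your relative-entropy identity on the original system \eqref{CNS} is more economical here because it produces \emph{no} right-hand side; in fact your ``Korn-type fluctuation terms'' and subsequent Gronwall step are unnecessary: since $2\mu|Du|^2+\lambda(\div u)^2\ge 0$ pointwise and $\rho^\alpha\ge(1/2)^\alpha$, one has directly $\int\rho^\alpha[\,\cdots]\ge(1/2)^\alpha\int[2\mu|Du|^2+\lambda(\div u)^2]=(1/2)^\alpha[\mu\|\nabla u\|_{L^2}^2+(\mu+\lambda)\|\div u\|_{L^2}^2]$, and $\mu+\lambda\ge\mu/3>0$ follows from $2\mu+3\lambda\ge 0$ (Korn's identity is applied only \emph{after} stripping the weight, so no $\nabla\rho^\alpha$ terms appear; note also that your chain (ii)$\to$(iii) giving $c\propto 2\mu+3\lambda$ degenerates at the borderline, whereas this ordering does not). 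What each route buys: the paper's term-by-term scheme reuses the same template verbatim for the higher-order Lemma~\ref{higherestimate}, where the smallness $|\alpha-1|\le\delta$ becomes essential to absorb $\|H_\alpha(\varrho)\|_{L^\infty}\|\nabla^{k+1}u\|_{L^2}^2$; your entropy argument is shorter at $k=0$ but has no higher-order analogue, so for $k\ge 1$ you would revert to the paper's method anyway.
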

    \begin{proof}
  By multiplying the equation \eqref{linearized system} by $\varrho$ and $u$, respectively, and adding the resulting equations, we obtain  
    \begin{equation}\begin{aligned}\label{i}
        \frac{1}{2}\frac{d}{dt}&\int_{\mathbb{R}^3}\Big(|\varrho|^2+|u|^2\Big)dx+\mu\int_{\mathbb{R}^3}|\nabla u|^2dx+(\lambda+\mu)\int_{\mathbb{R}^3}|\text{div}u|^2dx\\
        =&-\int_{\mathbb{R}^3}(\varrho\text{div u}+u\cdot{\nabla\varrho})\varrho dx\\
        &-\Big(u\cdot{\nabla u}+H_{\gamma}(\varrho)\nabla\varrho-G_{\alpha}(\varrho)(2\mu\nabla\varrho\cdot{Du}+\lambda\nabla\varrho\text{div}u\mathbb{I}_3)\\
        &-H_\alpha(\varrho)(\mu\triangle u+(\lambda+\mu)\nabla\text{div}u)\Big)\cdot{u}dx\\
        =&\sum_{i=1}^{6}I_i.
        \end{aligned}\end{equation}
By combining \eqref {priori assumption} with H\"older's inequality, we derive
\begin{equation}\begin{aligned}\label{i1}
I_1=&-\int_{\mathbb{R}^3}\Big(\varrho\text{div u}+u\cdot{\nabla\varrho}\Big)\varrho dx\\
&\leq C\|\nabla u\|_{L^\infty}\|\varrho\|^2_{L^2}.
 \end{aligned}\end{equation}
 It follows that
   \begin{equation}\begin{aligned}
   I_2\leq C\|\nabla u\|_{L^\infty}\|u\|^2_{L^2},
   \end{aligned}\end{equation}
and
\begin{equation}\begin{aligned}
I_3\leq C\|\nabla\varrho\|_{L^\infty}(\|u\|^2_{L^2}+\|\varrho\|^2_{L^2}).
 \end{aligned}\end{equation}
 By H\"older inequality, we have
 \begin{equation}\begin{aligned}
   I_4\leq& \frac{\mu}{4}\|\nabla u\|^2_{L^2}+C\Big(\|\nabla\varrho\|^2_{L^\infty}+\|\nabla\varrho\|^2_{L^\infty}\|\varrho\|^2_{L^\infty}\Big)\|u\|^2_{L^2}.
   \end{aligned}\end{equation}
   An application of integration by parts yields
   \begin{equation}\begin{aligned}\label{i5}
   I_5=&\int_{\mathbb{R}^3}\Big(H_\alpha(\varrho)(\mu\triangle u+(\lambda+\mu)\nabla\text{div}u\Big)\cdot{u}dx\\
   =&-\int_{\mathbb{R}^3}\nabla(H_\alpha(\varrho)(\mu\nabla u+(\lambda+\mu)\text{div}u)\cdot{u}dx\\
   &-\int_{\mathbb{R}^3}H_\alpha(\varrho)(\mu\nabla u+(\lambda+\mu)\text{div}u)\nabla udx\\
   \leq& \frac{\mu}{4}\|\nabla u\|^2_{L^2}+C\Big(\|\nabla\varrho\|^2_{L^\infty}+\|\nabla u\|^2_{L^\infty}\Big)\Big(\|u\|^2_{L^2}+\|\varrho\|^2_{L^2}\Big).
   \end{aligned}\end{equation}   
   Inserting \eqref{i1}-\eqref{i5} into \eqref{i}, we derive that
 \begin{equation}\begin{aligned}
  \frac{1}{2}\frac{d}{dt}&\int_{\mathbb{R}^3}\Big(|\varrho|^2+|u|^2)dx+\frac{\mu}{2}\int_{\mathbb{R}^3}|\nabla u|^2dx+(\lambda+\mu)\int_{\mathbb{R}^3}|\text{div}u|^2dx\\
\leq&C\Big(\|\nabla u\|_{L^\infty}+\|\nabla\varrho\|_{L^\infty}+\|\nabla\varrho\|^2_{L^\infty}+\|\nabla\varrho\|^2_{L^\infty}\|\varrho\|^2_{L^\infty}+\|\nabla u\|^2_{L^\infty}\Big)\Big(\|u\|^2_{L^2}+\|\varrho\|^2_{L^2}\Big).
   \end{aligned}\end{equation}
The proof is completed by applying Gronwall’s inequality and the  assumption \eqref{priori assumption}.
 \end{proof}

 When closing the high-order energy estimates, higher-order dissipation terms of the velocity will emerge. The introduction of the constraint on the viscosity,
 $$|\alpha-1|\le \delta,$$
is indispensable for absorbing certain dissipation terms.
\begin{lema}\label{higherestimate}
		Under the assumption \eqref{priori assumption}, for any integer $1\leq k\leq4$,
		 there holds that
		\begin{equation}
		\label{high energy estimate}
		\begin{aligned}
				\sup_{0\leq t\leq T}(\|\nabla^k\varrho\|^2_{L^2}+\|\nabla^ku\|^2_{L^2})+\int_{0}^{T}\|\nabla ^{k+1}u\|^2_{L^2}dt\leq C,
		\end{aligned}\end{equation}
		with $C=C(a,\lambda,\mu, \alpha,\gamma,C_0)$.
	\end{lema}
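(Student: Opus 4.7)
My plan is to argue by induction on $k\in\{1,2,3,4\}$, performing an $H^k$-level energy estimate directly on the linearized system \eqref{linearized system}. At each step I would apply $\nabla^k$ to both equations, pair against $\nabla^k\varrho$ and $\nabla^k u$ respectively, sum, and integrate by parts. The linear cross-terms arising from $\nabla^{k+1}\varrho$ in the momentum equation and $\nabla^k\div u$ in the continuity equation cancel, leaving the full parabolic dissipation $\mu\|\nabla^{k+1}u\|_{L^2}^2+(\mu+\lambda)\|\nabla^k\div u\|_{L^2}^2$ on the left. The four nonlinear contributions $\nabla^k N_\varrho$, $\nabla^k N_1$, $\nabla^k N_2$, $\nabla^k N_3$ are handled by the Kato--Ponce product rule \eqref{iii}, the commutator estimate \eqref{1iii}, the composition bound \eqref{density function} applied to $H_\gamma$ and $G_\alpha$, the $L^\infty$ regularity criterion \eqref{priori assumption}, and the induction hypothesis; each such term can be bounded by either $\eta(1+t)^{-2}\cdot E_k(t)$ with $E_k=\|\nabla^k(\varrho,u)\|_{L^2}^2$ or by a small fraction of the dissipation plus a lower-order piece already controlled.

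\textbf{The quasilinear term $N_4$.} The decisive contribution is $N_4=H_\alpha(\varrho)(\mu\Delta u+(\mu+\lambda)\nabla\div u)$, whose differential order matches the dissipation. I would write $\nabla^k(H_\alpha(\varrho)\Delta u)=H_\alpha(\varrho)\nabla^k\Delta u+[\nabla^k,H_\alpha(\varrho)]\Delta u$ and integrate the principal piece by parts once, producing
\begin{equation*}
-\mu\!\int\! H_\alpha(\varrho)|\nabla^{k+1}u|^2\,dx-\mu\!\int\!\nabla H_\alpha(\varrho)\cdot\nabla^{k+1}u\,\nabla^k u\,dx.
\end{equation*}
Since $\|\varrho\|_{L^\infty}\le\tfrac12$ and $|\alpha-1|\le\delta$ together yield the pointwise bound $|H_\alpha(\varrho)|\le C\delta$, the first integral is absorbed into $\mu\|\nabla^{k+1}u\|_{L^2}^2$ provided $\delta$ is small; this is precisely where \eqref{alphasmall} becomes indispensable. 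The second integral is controlled through $\|\nabla H_\alpha(\varrho)\|_{L^\infty}\le C\|\nabla\varrho\|_{L^\infty}\le C\eta(1+t)^{-2}$ together with Young's inequality, yielding a Gronwall-friendly coefficient on $E_k(t)$ plus a further fraction of the dissipation. The commutator $[\nabla^k,H_\alpha(\varrho)]\Delta u$ is estimated via \eqref{1iii} with H\"older splits placing the $L^\infty$ weights on $\nabla\varrho$ or $\nabla u$ and the $L^2$ weights on $\nabla^{k+1}u$ (absorbed) or on induction-controlled quantities.

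\textbf{Closing the induction and main obstacle.} After these manipulations the $k$-th order differential inequality takes the form
\begin{equation*}
\tfrac{d}{dt}E_k(t)+\tfrac{\mu}{2}\|\nabla^{k+1}u\|_{L^2}^2\le \Phi(t)E_k(t)+\Psi(t),
\end{equation*}
where $\Phi\in L^1([0,\infty))$ is produced by the time-integrable decay $(1+t)^{-2}$ in \eqref{priori assumption} and $\Psi$ gathers the remaining pieces controlled by Lemma \ref{basicestimate} (for $k=1$) or by the induction hypothesis (for $k\ge2$). Gronwall's inequality then delivers the uniform bound \eqref{high energy estimate} at level $k$, and time-integration of the same inequality yields the dissipation bound on $\nabla^{k+1}u$. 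The principal technical hurdle is precisely the bookkeeping for $N_4$: every appearance of $\nabla^{k+2}u$ must be either absorbed into the dissipation via the viscosity smallness $|\alpha-1|\le\delta$ or redistributed by Young's inequality. It is the combination of this smallness with the integrable decay $\eta(1+t)^{-2}$ from \eqref{priori assumption} that lets us close the estimate without imposing any smallness on the derivatives of the initial data.
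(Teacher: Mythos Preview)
Your proposal is correct and follows the same overall strategy as the paper: induction on $k$, the $H^k$ energy identity on \eqref{linearized system}, absorption of the $N_4$ principal part via the pointwise bound $|H_\alpha(\varrho)|\le C|\alpha-1|\|\varrho\|_{L^\infty}\le C\delta$, and a Gronwall closure driven by the time-integrable coefficient $(1+t)^{-2}$ supplied by \eqref{priori assumption}.

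The one organizational difference worth flagging is in the momentum nonlinearities. The paper integrates by parts \emph{first}, writing $\int\nabla^k N_u\cdot\nabla^k u=-\int\nabla^{k-1}N_u\cdot\nabla^{k+1}u$, and then applies the product estimate \eqref{iii} and Young's inequality; in particular for $N_4$ this makes the top density contribution $\|\nabla^{k-1}\varrho\|_{L^4}\|\nabla^2 u\|_{L^4}$, which interpolates cleanly between $\|\varrho\|_{L^\infty}$, $\|\nabla u\|_{L^\infty}$, $\|\nabla^k\varrho\|_{L^2}$ and the dissipation $\|\nabla^{k+1}u\|_{L^2}$. Your commutator split $\nabla^k(H_\alpha\Delta u)=H_\alpha\nabla^k\Delta u+[\nabla^k,H_\alpha]\Delta u$ instead produces a term $\nabla^k H_\alpha\cdot\Delta u$ paired against $\nabla^k u$; at $k=4$ this contains a $\nabla^2 u$ factor, so the H\"older split you describe (``$L^\infty$ weights on $\nabla\varrho$ or $\nabla u$'') is not literally available. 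This is not a genuine gap --- one can rescue it by interpolating, e.g.\ $\|\nabla^2 u\|_{L^4}\|\nabla^4 u\|_{L^4}\le C\|\nabla u\|_{L^\infty}\|\nabla^{5}u\|_{L^2}$, and absorbing into the dissipation --- but the paper's integrate-by-parts-first route avoids this extra bookkeeping at top order.
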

    \begin{proof}
For each integer $k$ satisfying $1\leq k\leq 4$, we first take the $k-$th order derivative of both equations $\eqref{linearized system}_1$ and $\eqref{linearized system}_2$, then multiply the resulting equation by $\nabla^k\varrho$ and $\nabla^k u$ respectively, finally combine  and integrate the sum equations over $\mathbb{R}^3$ to get
\begin{equation}\begin{aligned}\label{L}
  \frac{1}{2}\frac{d}{dt}&\int_{\mathbb{R}^3}(|\nabla^k\varrho|^2+|\nabla^k u|^2)dx+\mu\int_{\mathbb{R}^3}|\nabla^{k+1} u|^2dx+(\lambda+\mu)\int_{\mathbb{R}^3}|\nabla^k\text{div}u|^2dx\\
=&-\int_{\mathbb{R}^3}\nabla^k(\text{div}(\varrho u))\cdot{\nabla^{k}\varrho}dx-\int_{\mathbb{R}^3}\nabla^{k}\Big(u\cdot{\nabla u}+H_{\gamma}(\varrho)\nabla\varrho\\
&-G_\alpha (\varrho)(2\mu\nabla\varrho\cdot{Du}+\lambda\nabla\varrho\text{div}u\mathbb{I}_3)\\
&-H_\alpha (\varrho)(\mu\triangle u-(\lambda+\mu)\nabla\text{div}u)
\Big)\cdot{\nabla^{k}u}dx\\
=&-\int_{\mathbb{R}^3}\nabla^k(\text{div}(\varrho u))\nabla^k\varrho dx+\int_{\mathbb{R}^3}\nabla^{k-1}\Big(u\cdot{\nabla u}+H_{\gamma}(\varrho)\nabla\varrho\\
&-G_\alpha (\varrho)(2\mu\nabla\varrho\cdot{Du}+\lambda\nabla\varrho\text{div}u\mathbb{I}_3)\\
&-H_\alpha (\varrho)(\mu\triangle u-(\lambda+\mu)\nabla\text{div}u)\Big)\cdot{\nabla^{k+1}u}dx\\
=&\sum_{i=1}^{5}L_i.
   \end{aligned}\end{equation} 
$L_1$ can be represented as:
\begin{equation}\begin{aligned}
L_1=&-\int_{\mathbb{R}^3}\nabla^k(\varrho\text{div}u+u\cdot{\nabla\varrho})\cdot{\nabla^{k}\varrho}dx\\
=&L_{11}+L_{12}.
    \end{aligned}\end{equation} 
Using \eqref{iii} and Holder's inequality yields
\begin{equation}\begin{aligned}\label{L1}
L_{11}=&-\int_{\mathbb{R}^3}\nabla^k(\varrho\text{div}u)\cdot{\nabla^{k}\varrho}dx\\
\leq&C\|\varrho\|_{L^\infty}\|\nabla^{k}\div u\|_{L^2}\|\nabla^k\varrho\|_{L^2}+C\|\text{div}u\|_{L^\infty}\|\nabla^k\varrho\|^2_{L^2}\\
\leq&\frac{\mu}{16}\|\nabla^{k+1}u\|^2_{L^2}+C\Big(\|\varrho\|^2_{L^\infty}+\|\nabla u\|_{L^\infty}\Big)\|\nabla^k\varrho\|^2_{L^2}.
    \end{aligned}\end{equation}  
After applying integration by parts and making use of the Sobolev inequality together with \eqref{iii}, \eqref{1iii}, we obtain
\begin{equation}\begin{aligned}\label{L2}
L_{12}=&-\int_{\mathbb{R}^3}\nabla^k(u\cdot{\nabla\varrho})\cdot{\nabla^{k}\varrho}dx\\
\leq&-\int_{\mathbb{R}^3}\text{div}u||\nabla^k\varrho|^2dx+\int_{\mathbb{R}^3}([\nabla^k,u]\cdot{\nabla\varrho})\nabla^k\varrho dx\\
\leq&C\|\nabla u\|_{L^\infty}\|\nabla^k\varrho\|^2_{L^2}+C\|\nabla\varrho\|_{L^\infty}\|\nabla^ku\|_{L^2}\|\nabla^k\varrho\|_{L^2}\\
\leq&C(\|\nabla u\|_{L^\infty}+\|\nabla\varrho\|_{L^\infty})(\|\nabla^ku\|^2_{L^2}+\|\nabla^k\varrho\|^2_{L^2}).
\end{aligned}\end{equation}      
The combination of \eqref{L1} and \eqref{L2} gives
\begin{equation}\begin{aligned}
L_1\leq&\frac{\mu}{16}\|\nabla^{k+1}u\|^2_{L^2}+C\Big(\|\varrho\|^2_{L^\infty}+\|\nabla u\|_{L^\infty}+\|\nabla\varrho\|_{L^\infty}\Big)\Big(\|\nabla^ku\|^2_{L^2}+\|\nabla^k\varrho\|^2_{L^2}\Big).
\end{aligned}\end{equation} 
Similarly, applying \eqref{iii} yields
\begin{equation}\begin{aligned}
L_2=&-\int_{\mathbb{R}^3}\nabla^{k-1}\Big(u\cdot{\nabla u}\Big)\cdot{\nabla^{k+1}u}dx\\
\leq&\frac{\mu}{16}\|\nabla^{k+1}u\|^2_{L^2}+C\|\nabla^{k-1}(u\cdot{\nabla u})\|^2_{L^2}\\
\leq&\frac{\mu}{16}\|\nabla^{k+1}u\|^2_{L^2}+C\|u\|^2_{L^\infty}\|\nabla^ku\|^2_{L^2}+C\|\nabla u\|^2_{L^\infty}\|\nabla^{k-1}u\|^2_{L^2},
\end{aligned}\end{equation} 
and
\begin{equation}\begin{aligned}
L_3=&\int_{\mathbb{R}^3}\nabla^{k-1}(H_\gamma(\varrho))\nabla\varrho)\cdot{\nabla^{k+1}u}dx\\
\leq&C\|\nabla\varrho\|_{L^\infty}\|\nabla^{k-1}H_\gamma(\varrho)\|_{L^2}\|\nabla^{k+1}u\|_{L^2}+C\|H_\gamma(\varrho)\|_{L^\infty}\|\nabla^k\varrho\|_{L^2}\|\nabla^{k+1}u\|_{L^2}\\
\leq&C\Big(\|\nabla\varrho\|_{L^\infty}\|\nabla^{k-1}\varrho\|_{L^2}+\|\varrho\|_{L^\infty}\|\nabla^k\varrho\|_{L^2}\Big)\|\nabla^{k+1}u\|_{L^2}\\
\leq&\frac{\mu}{16}\|\nabla^{k+1}u\|^2_{L^2}+C\|\nabla\varrho\|^2_{L^\infty}\|\nabla^{k-1}\varrho\|^2_{L^2}+C\|\varrho\|^2_{L^\infty}\|\nabla^k\varrho\|^2_{L^2}.
\end{aligned}\end{equation}
Applying Holder's inequality together with \eqref{iii}, we obtain
\begin{equation}\begin{aligned} \label{L5} 
L_4=&\int_{\mathbb{R}^3}\nabla^{k-1}\Big(G_\alpha (\varrho)(2\mu\nabla\varrho\cdot{Du}+\lambda\nabla\varrho\text{div}u\mathbb{I}_3)\Big)\cdot{\nabla^{k+1}u}dx\\
\leq&C\Big(\|G_\alpha (\varrho)\|_{L^\infty}\|\nabla^{k-1}(\nabla\varrho\nabla u)\|_{L^2}\\
&+\|\nabla\varrho\|_{L^\infty}\|\nabla u\|_{L^\infty}\|\nabla^{k-1}(G_\alpha (\varrho))\|_{L^2}
\Big)\|\nabla^{k+1}u\|_{L^2}\\
\leq&\frac{\mu}{16}\|\nabla^{k+1}u\|^2_{L^2}+\Big(1+\|\varrho\|^2_{L^\infty}\Big)\Big(\|\nabla\varrho\|^2_{L^\infty}\|\nabla^{k}u\|^2_{L^2}+\|\nabla u\|^2_{L^\infty}\|\nabla^{k}\varrho\|^2_{L^2}\Big)\\
&+\|\nabla \varrho\|^2_{L^\infty}\|\nabla u\|^2_{L^\infty}\|\nabla^{k-1}\varrho\|^2_{L^2}\\
\leq&\frac{\mu}{16}\|\nabla^{k+1}u\|^2_{L^2}+\Big(\|\nabla\varrho\|^2_{L^\infty}+\|\nabla u\|^2_{L^\infty}\|\varrho\|^2_{L^\infty}+\|\nabla u\|^2_{L^\infty}\\
&+\|\nabla \varrho\|^2_{L^\infty}\|\varrho\|^2_{L^\infty}\Big)\Big(\|\nabla^{k}u\|^2_{L^2}+\|\nabla^{k}\varrho\|^2_{L^2}\Big)\\
&+\|\nabla\varrho\|^2_{L^\infty}\|\nabla u\|^2_{L^\infty}\|\nabla^{k-1}\varrho\|^2_{L^2}.
\end{aligned}\end{equation}
As for $L_5$, we first establish through Taylor expansion 
\begin{equation*}
	\|H_\alpha (\varrho)\|_{L^\infty}\leq C|\alpha-1|\|\varrho\|_{L^\infty}.
\end{equation*} 
 A careful case-by-case analysis is required for the estimation of $L_5$. Specifically:

When $k=1, p=2,q=\infty,$  we observe
\begin{equation}\begin{aligned}
L_5=&\int_{\mathbb{R}^3}\Big(H_\alpha (\varrho)(\mu\triangle u+(\lambda+\mu)\nabla\text{div}u)\Big)\cdot{\nabla^{2}u}dx\\
\leq&C\|H_\alpha (\varrho)\|_{L^\infty}\|\nabla^{2}u\|^2_{L^2}\\
\leq&C|\alpha-1|\|\varrho\|_{L^\infty}\|\nabla^{2}u\|^2_{L^2}\\
\leq&\frac{\mu}{16}\|\nabla^{2}u\|^2_{L^2}.
\end{aligned}\end{equation}
where we have use \eqref{alphasmall}.

 In the case where $k\geq2,$ it follows from \eqref{alphasmall} and \eqref{iii} that
\begin{equation}\begin{aligned}
L_5\leq&\frac{\mu}{32}\|\nabla^{k+1}u\|^2_{L^2}+C\|\nabla^{k-1}(H_\alpha (\varrho)\nabla^2u)\|^2_{L^2}\\
\leq&(\frac{\mu}{32}+\|H_\alpha (\varrho)\|^2_{L^\infty})\|\nabla^{k+1}u\|^2_{L^2}+C\|\nabla^{k-1}H_\alpha (\varrho)\|^2_{L^4}\|\nabla^2 u\|^2_{L^4}\\
\leq&(\frac{\mu}{32}+\|H_\alpha (\varrho)\|^2_{L^\infty})\|\nabla^{k+1}u\|^2_{L^2}+C\|\nabla^{k-1}\varrho\|^2_{L^4}\|\nabla^2 u\|^2_{L^4}\\
\leq&(\frac{\mu}{32}+C|\alpha-1|^2\|\varrho\|^2_{L^\infty})\|\nabla^{k+1}u\|^2_{L^2}+C\|\varrho\|^{\frac{1}{2k-3}}_{L^\infty}\|\nabla u\|^{\frac{4k-7}{2k-3}}_{L^\infty}\|\nabla^k\varrho\|^{\frac{4k-7}{2k-3}}_{L^2}\|\nabla^{k+1}u\|^{\frac{1}{2k-3}}_{L^2}\\
\leq&\frac{\mu}{16}\|\nabla^{k+1}u\|^2_{L^2}+C\|\varrho\|^{\frac{2}{4k-7}}_{L^\infty}\|\nabla u\|^{2}_{L^\infty}\|\nabla^k\varrho\|^{2}_{L^2}.
\end{aligned}\end{equation}

By collecting all the estimates from \eqref{L1} to \eqref{L5} and returning to equation \eqref{L}, we establish that

\begin{equation}\begin{aligned}\label{ll}
\frac{1}{2}\frac{d}{dt}&\int_{\mathbb{R}^3}(|\nabla^k\varrho|^2+|\nabla^k u|^2)dx+\mu\int_{\mathbb{R}^3}|\nabla^{k+1} u|^2dx+(\lambda+\mu)\int_{\mathbb{R}^3}|\nabla^k\text{div}u|^2dx\\
\leq&\frac{5\mu}{16}\|\nabla^{k+1}u\|^2_{L^2}+\Big(\|\nabla\varrho\|_{L^\infty}+\|\nabla u\|_{L^\infty}+\|\varrho\|^2_{L^\infty}+\|u\|^2_{L^\infty}\\
&+\|\varrho\|^{\frac{2}{4k-7}}_{L^\infty}\|\nabla u\|^{2}_{L^\infty}+\|\nabla\varrho\|^2_{L^\infty}+\|\nabla u\|^2_{L^\infty}\|\varrho\|^2_{L^\infty}+\|\nabla u\|^2_{L^\infty}\\
&+\|\nabla \varrho\|^2_{L^\infty}\|\varrho\|^2_{L^\infty}\Big)\Big(\|\nabla^{k}u\|^2_{L^2}+\|\nabla^{k}\varrho\|^2_{L^2}\Big)\\
&+\Big(\|\nabla\varrho\|^2_{L^\infty}\|\nabla u\|^2_{L^\infty}+\|\nabla\varrho\|^2_{L^\infty}+\|\nabla u\|^2_{L^\infty}\Big)\Big(\|\nabla^{k-1}\varrho\|^2_{L^2}+\|\nabla^{k-1}u\|^2_{L^2}\Big).\\
\end{aligned}\end{equation}
Combining \eqref{ll} with Gronwall's inequality, we obtain the desired estimate \eqref{high energy estimate}.
    \end{proof}
    
\subsection{Bootstrap}\label{Bootstrap}
\label{Section 4}
	In this section, our goal is to close the a priori estimates \eqref{priori assumption}. According to the Duhamel formula \eqref{Duhamel}, we shall consider the linear part $G(t)*V_0$ and the nonlinear part $\int_0^t G(t-s)*N(V)(s)ds$ respectively. Furthermore, we recall that
	\begin{equation*}
	G=G_{L}+G_{HR}+G_{HS},
	\end{equation*}
	 and  the nonlinear term:
	\begin{equation*}
		N(V)=
        \begin{pmatrix}
        N_\varrho(V)    \\
        N_u (V)
        \end{pmatrix}
        , \quad N_u(V) =N_1(V)+N_2(V)+N_3(V)+N_4(V),
	\end{equation*}
	where $N_\varrho$ and $N_i(i\le 4)$  are defined as \eqref{nonlinear rho} and \eqref{nonlinear u}. Recall \eqref{priori assumption}, notice the regularity criterion can be rewritten as
\begin{equation}
\label{regularity criteria}
	\norm{V(t)}_{L^\infty}\le \frac{1}{2} (1+t)^{-\frac 32}, \norm{\nabla V(t) }_{L^\infty}\le \eta(1+t)^{-2},
\end{equation}
as well as the initial data:
\begin{equation}
\label{initial V}
	\norm{ V_0}_{L^1\cap L^2}\le C_0.
\end{equation} 
Then by the regularity criterion \eqref{regularity criteria}, we have that
	\begin{equation}
	\label{observation}
		|N_\varrho (\varrho ,u)|+|N_1 (\varrho ,u)|+|N_2 (\varrho ,u) |\le C\abs{V}\abs{\nabla V }.
	\end{equation}
In the following discussion, we will use the above observations to simplify the calculations.

In the subsequent closure of the a priori estimates for the regularity criteria, the estimate for $\|N(V)\|_{L^1}$ plays an important role. This is because obtaining control of $\|V\|_{L^1}$ is inherently challenging. Thus, it is necessary to first establish a estimate of $\|V\|_{L^{\frac{4}{3}}}$, which is crucial for closing the time-decay regularity criterion.
\begin{lema}
	Suppose regularity criteria \eqref{regularity criteria} and the initial assumption \eqref{initial V} are valid. Then we have
	\begin{equation}
	\label{V 43}
		\norm{V(t) }_{L^{\frac 43}}\le CC_0(1+t)^{-\frac{7}{40}},\,\,t\in[0,T].
	\end{equation}
\end{lema}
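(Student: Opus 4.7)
The plan is to apply the Duhamel representation \eqref{Duhamel} together with the frequency decomposition $G=G_L+G_{HR}+G_{HS}$ from \eqref{GL}--\eqref{GR GS}, and to estimate the contribution of each piece in $L^{4/3}$ separately. For the linear term $G(t)*V_0$, Young's inequality combined with \eqref{low frequency} at $p=\tfrac{4}{3}$ applied to $V_0\in L^1$ yields
$$\norm{G_L(t)*V_0}_{L^{4/3}}\le \norm{G_L(t)}_{L^{4/3}}\norm{V_0}_{L^1}\le CC_0(1+t)^{-3/8},$$
which is already stronger than the target rate. For $G_{HR}*V_0$ and $G_{HS}*V_0$ I would interpolate $\norm{V_0}_{L^{4/3}}\le \norm{V_0}_{L^1}^{1/2}\norm{V_0}_{L^2}^{1/2}\le CC_0$ and apply \eqref{regular high}--\eqref{singular} to obtain exponential decay.

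For the nonlinear integral $\int_0^t G(t-s)*N(V)(s)\,ds$ I split $N(V)$ into the five terms $N_\varrho, N_1, N_2, N_3, N_4$. By \eqref{observation} the first three are pointwise bounded by $C|V||\nabla V|$; the term $N_3$ is controlled by $C|\nabla V|^2$ since $G_\alpha(\varrho)$ is bounded under \eqref{regularity criteria}; and the Taylor expansion $|H_\alpha(\varrho)|\le C|\alpha-1||\varrho|$ gives $|N_4|\le C|\alpha-1||\varrho||\nabla^2 u|$. For the low-frequency contribution, Young with \eqref{low frequency} gives
$$\norm{G_L(t-s)*N(V)(s)}_{L^{4/3}}\le C(1+t-s)^{-3/8}\norm{N(V)(s)}_{L^1},$$
and I would bound $\norm{N(V)(s)}_{L^1}$ by H\"older estimates of the type $\norm{V\nabla V}_{L^1}\le \norm{V}_{L^2}\norm{\nabla V}_{L^2}$ and $\norm{\varrho\nabla^2 u}_{L^1}\le\norm{\varrho}_{L^2}\norm{\nabla^2 u}_{L^2}$, combined with Lemmas \ref{basicestimate}--\ref{higherestimate} and the decay in \eqref{regularity criteria}. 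Splitting the time integral into $[0,t/2]$ and $[t/2,t]$ would let the appropriate decay factor be pulled out of the integral on each piece.

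For the high-frequency contributions I would use the exponential kernel bounds directly in $L^{4/3}$. By the matrix structure in \eqref{GR GS}, $G_{HS}$ convolved with $N(V)$ sees only the first component $N_\varrho$, for which $\norm{N_\varrho}_{L^{4/3}}\le \norm{V}_{L^4}\norm{\nabla V}_{L^2}$ is controlled via the interpolation $\norm{V}_{L^4}\le \norm{V}_{L^2}^{1/2}\norm{V}_{L^\infty}^{1/2}\le CC_0^{1/4}(1+s)^{-3/4}$. The regular part $G_{HR}$ can absorb one derivative, which accommodates $N_1,N_2,N_3$ without difficulty.

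The main obstacle will be the $N_4$ contribution, which involves $\nabla^2 u$. Here I plan to combine the twin smallness $\norm{H_\alpha(\varrho)}_{L^\infty}\le C|\alpha-1|\norm{\varrho}_{L^\infty}\le C\delta(1+s)^{-3/2}$ enforced by \eqref{alphasmall}--\eqref{regularity criteria} with the higher-order bound from Lemma \ref{higherestimate} to control $\norm{\nabla^2 u}_{L^p}$ for $p=1$ and $p=4/3$ by Gagliardo--Nirenberg interpolation between $\norm{\nabla^2 u}_{L^2}$ and $\norm{\nabla u}_{L^\infty}$. The somewhat unusual exponent $\tfrac{7}{40}$ in the conclusion will emerge from optimizing the interpolation exponents so that the gain from the low-frequency kernel decay $(1+t-s)^{-3/8}$, combined with the decay of $\norm{N(V)(s)}_{L^1}$ produced by \eqref{regularity criteria} and the energy estimates, closes the $L^{4/3}$ bound.
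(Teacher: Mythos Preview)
Your approach is correct and matches the paper's: Duhamel with the $G_L+G_{HR}+G_{HS}$ splitting, Young's inequality reducing the low-frequency piece to $\|N(V)\|_{L^1}$ via the H\"older pairings you list, and the exponential kernel bounds for the high-frequency pieces. Two minor differences worth noting: the paper neither splits the time interval nor invokes the $|\alpha-1|$ smallness in this lemma (only $|H_\alpha(\varrho)|\le C|\varrho|$ is used for $N_4$), and the exponent $\tfrac{7}{40}=\tfrac{3}{8}+\tfrac{4}{5}-1$ arises precisely from the interpolation $\|\nabla V\|_{L^2}\le C\|V\|_{L^2}^{3/5}\|\nabla V\|_{L^\infty}^{2/5}\le CC_0^{3/5}(1+t)^{-4/5}$, which is the worst-decaying contribution to $\|N(V)\|_{L^1}$.
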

\begin{proof}Applying the Duhamel formula \eqref{Duhamel}, we have
\begin{equation}
	\label{linear 4}
	\begin{split}
 \|V(t)\|_{L^{\frac{4}{3}}} \leq &\|G(t) * V_0 \|_{L^{\frac{4}{3}}}+ \int_0^t \|G(t - s) * N(V)(s)\|_{L^{\frac{4}{3}}} ds\\
 \leq&\|G(t) * V_0 \|_{L^{\frac{4}{3}}}+\int_0^t \|G_L(t - s) * N(V)(s)\|_{L^{\frac{4}{3}}} ds\\
 &+\int_0^t \|G_{HR}(t - s) * N(V)(s)\|_{L^{\frac{4}{3}}} ds+\int_0^t \|G_{HS}(t - s) * N(V)(s)\|_{L^{\frac{4}{3}}} ds\\
 =:&\sum_{i=1}^{4}K_j.
 \end{split}
\end{equation}
Here $K_1$ is the linear part, and $K_2, K_3,K_4$ correspond to the nonlinear part.
 
 By the decay estimate \eqref{low frequency}-\eqref{singular} of the Green function and $L^1-$smallness \eqref{initial V} of the initial data, it comes that
	\begin{equation}
	\label{linear 4.1}
	\begin{split}
		K_1\le & \norm{G_L(t) *V_0}_{L^\frac{4}{3}}+\norm{G_{HR}(t)*V_0}_{L^\frac{4}{3}}+\norm{G_{HS}(t)*V_0}_{L^\frac{4}{3}}\\
		\le & \norm{G_L(t)}_{L^\frac{4}{3}}\norm{V_0}_{L^1}+\norm{G_{HR}(t)}_{L^\frac{4}{3}}\norm{V_0}_{L^1}+e^{-Ct}\norm{V_0}_{L^\frac{4}{3}}\\
		\le & C(1+t)^{-\frac 38}C_0+Ce^{-Ct}C_0+Ce^{-Ct}C_0\le C(1+t)^{-\frac 38}C_0.
	\end{split}
\end{equation}
As for the nonlinear term, we can derive the estimate for the low-frequency part $\int_0^t G_L(t-s)*N(V)(s)ds$ by using \eqref{basic energy estimate} and \eqref{observation}:
\begin{equation*}
	\begin{split}
		K_2\leq&\int_0^{t}\| G_L(t-s)*(N_\varrho, \  N_1+N_2 +N_3+N_4)^T(s)\|_{{L^\frac 43}}ds\\
       \leq&C\int_{0}^{t}(1+t-s)^{-\frac{3}{2}(1-\frac{3}{4})}\Big(\|\text{div}(\varrho u)\|_{L^1}+\|u\cdot{\nabla}u\|_{L^1}\\
&+\|H_\gamma (\varrho)\nabla\varrho\|_{L^1}+\|G_\alpha (\varrho)\nabla\varrho\nabla u\|_{L^1}+\|H_\alpha (\varrho)\nabla^2u\|_{L^1}\Big)\\
\leq&C\int_{0}^{t}(1+t-s)^{-\frac{3}{2}(1-\frac{3}{4})}\Big((\|u\|_{L^2}+\|\varrho\|_{L^2})(\|\nabla u\|_{L^2}+\|\nabla\varrho\|_{L^2})\\
&+(1+\|\varrho\|_{L^{\infty}})(\|\nabla u\|_{L^2}\|\nabla\varrho\|_{L^2})+\|\varrho\|_{L^2}\|\nabla^2u\|_{L^2}+(1+\|\varrho\|_{L^{\infty}})(\|\nabla u\|_{L^2}\|\nabla\varrho\|_{L^2})\Big)\\
\leq&C\int_{0}^{t}(1+t-s)^{-\frac{3}{8}}(\|V\|_{L^2}\|\nabla V\|_{L^2}++\norm{\nabla V }_{L^2}^2+\|V\|_{L^2}\|\nabla^2 V\|_{L^2}+\norm{\nabla V }_{L^2}^2)\\
\leq&C\int_{0}^{t}(1+t-s)^{-\frac{3}{8}}(C_0^{\frac{8}{5}}(1+s)^{-\frac{4}{5}}+C_0(1+s)^{-1}+C_0^{\frac{6}{5}}(1+s)^{-\frac{8}{5}})\\
\leq&CC_0(1+t)^{-\frac{7}{40}},
	\end{split}
\end{equation*}
where we use the following Gargliardo-Nirenberg interpolations by energy estimates \eqref{basic energy estimate}, \eqref{high energy estimate} and the regularity criteria \eqref{regularity criteria}:
\begin{equation}
\label{grad V 2}
	\begin{split}
		\norm{\nabla V(t) }_{L^2}\le  C\norm{V(t)}_{L^2}^{\frac{3}{5}}\norm{\nabla V(t) }_{L^\infty}^{\frac 2{5}}\le C C_0^\frac{3}{5}(1+t)^{-\frac{4}{5}},\\
	\end{split}
\end{equation}
\begin{equation}
\label{grad2 V 2}
	\norm{\nabla^2 V (t)}_{L^2}\le C\norm{V(t) }_{L^\infty}^\frac{2}{3}\norm{\nabla^3 V(t) }_{L^2}^\frac 13 \le C(1+t)^{-1}.
\end{equation}Consequently, we can obtain that
\begin{equation}
\label{low 4.1}
	K_2\leq\int_0^t \norm{G_L(t-s)*N(V)(s)ds }_{L^\frac 43}\le CC_0(1+t)^{-\frac{7}{40}}.
\end{equation}
Since $G_{HR}$ is decay faster than $G_L$ in $L^p$ sense by \eqref{regular high}, it comes naturally:
\begin{equation}
\label{regular 4.1}
	K_3\leq\int_0^t \norm{G_{HR}(t-s)*N(V)(s)ds }_{L^\frac 43}\le CC_0(1+t)^{-\frac{7}{40}}.
\end{equation}
We finally handle the singular part using \eqref{singular}:
\begin{equation}
\label{singular 4.1}
	\begin{split}
		&K_4\leq\int_0^t \norm{G_{HS}(t-s)*N_\varrho(\varrho ,u)(s)ds}_{L^\frac 43}\le C\int_0^t e^{-C(t-s)}\norm{\text{div} (\varrho u) (s)}_{L^\frac 43}ds\\
		&\le C\int_0^t e^{-C(t-s)}\Big((\|u\|_{L^2}+\|\varrho\|_{L^2})(\|\nabla u\|_{L^4}+\|\nabla\varrho\|_{L^4})\Big)ds\\
		&\le  CC_0 \int_0^t e^{-C(t-s)}(1+s)^{-\frac{7}{5}}ds\le CC_0(1+t)^{-\frac{7}{40}} .\\
	\end{split}
\end{equation}
Here we apply the following interpolation in the fourth step by \eqref{basic energy estimate} and \eqref{regularity criteria}:
\begin{equation}
\label{grad V 4}
\norm{\nabla V(t) }_{L^4 }\le C\norm{V}_{L^2}^\frac 3{10} \norm{ \nabla V }_{L^\infty}^\frac{7}{10}\le CC_0^{\frac 3{10}} (1+t)^{-\frac 75}.
\end{equation}
Combining the above estimates \eqref{linear 4.1}-\eqref{singular 4.1} together, the proof is finished.
\end{proof}
With this $L^{1+}$ estimate at hand, we can now derive a refined uniform decay estimate for $V(t)=(\varrho,u)(t)$ by assuming $C_0$ small enough.
\begin{lema}
\label{lema 4.2}
	Suppose that regularity criteria \eqref{regularity criteria} and the initial assumption \eqref{initial V} is valid. Then there holds
	\begin{equation}
	\label{V infty}
		\norm{V(t) }_{L^\infty}\le \frac{1}{4} (1+t)^{-\frac 32},\,\,\,\,t\in[0,T].
	\end{equation}
\end{lema}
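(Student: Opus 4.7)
The plan is to bootstrap the $L^\infty$ bound on $V(t) = (\varrho,u)(t)$ from the Duhamel representation
$$V(t) = G(t)*V_0 + \int_0^t G(t-s)*N(V)(s)\,ds,$$
using the frequency decomposition $G = G_L + G_{HR} + G_{HS}$. The desired $(1+t)^{-3/2}$ rate will come from the heat-kernel-like behavior of $G_L$ in $L^\infty$ (cf.\ \eqref{low frequency}), while the numerical coefficient $\frac{1}{4}$ will be forced by making $C_0$ and $\eta$ sufficiently small, so that every term on the right-hand side contains a positive power of $C_0$ or $\eta$ (or decays exponentially).

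For the linear part, \eqref{low frequency} with $p=\infty$ gives $\|G_L(t)*V_0\|_{L^\infty}\le \|G_L(t)\|_{L^\infty}\|V_0\|_{L^1}\le CC_0(1+t)^{-3/2}$, and \eqref{regular high} gives $\|G_{HR}(t)*V_0\|_{L^\infty}\le CC_0e^{-Ct}$. The singular term \eqref{singular} requires $\|V_0\|_{L^\infty}$, which I will control by the Gagliardo-Nirenberg interpolation $\|V_0\|_{L^\infty}\le C\|V_0\|_{L^2}^{1/4}\|\nabla^2 V_0\|_{L^2}^{3/4}$ together with the bounded higher-order estimate \eqref{high energy estimate} at $t=0$; this yields a contribution of order $C_0^{1/4}e^{-Ct}\le CC_0^{1/4}(1+t)^{-3/2}$.

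For the nonlinear part, I will split the time integral as $\int_0^{t/2}+\int_{t/2}^t$ for each of the three pieces of $G$. On $[0,t/2]$ the Green function has good decay in $t-s$, so I will pair it with low integrability norms of $N(V)(s)$, namely $L^{4/3}$ or $L^1$, controlled via H\"older's inequality using the observation \eqref{observation} together with the $L^{4/3}$ bound \eqref{V 43}, the $L^2$ interpolation \eqref{grad V 2}, and the $L^4$ interpolation \eqref{grad V 4}. On $[t/2,t]$, where $G_L$ is more singular but $N(V)(s)$ is small because of the regularity criterion \eqref{regularity criteria}, I will pair $\|G_L(t-s)\|_{L^q}$ for larger $q$ against higher integrability norms of $N(V)(s)$. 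The $G_{HR}$ contribution is analogous with exponential decay in $t-s$. The $G_{HS}$ contribution, which only acts on the density equation's nonlinearity $N_\varrho = -\div(\varrho u)$, is bounded via \eqref{singular} as $\int_0^t e^{-C(t-s)}\|\div(\varrho u)(s)\|_{L^\infty}\,ds$, and $\|\div(\varrho u)\|_{L^\infty}$ is controlled by the regularity criterion combined with a Sobolev embedding applied to \eqref{high energy estimate}, so the exponential kernel integrated against the polynomial decay yields an acceptable bound.

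The main obstacle is bookkeeping: producing the explicit coefficient $\frac{1}{4}$ rather than a generic constant demands that every single contribution carry either a power of $C_0$ (from \eqref{initial smallness} and the $L^{4/3}$ estimate \eqref{V 43}), a power of $\eta$ (from \eqref{regularity criteria}), or a factor $e^{-Ct}$. This is most delicate for the singular piece, where $L^\infty$ quantities intrude, and for the quadratic terms $N_3, N_4$ containing $\nabla^2 u$, whose $L^p$ norms must be recovered by interpolating the merely bounded energy estimates \eqref{high energy estimate} against the pointwise decay of the regularity criterion. Once these Gagliardo-Nirenberg interpolations are set up carefully, every resulting time integral $\int_0^t (1+t-s)^{-a}(1+s)^{-b}\,ds$ is dominated by $C(1+t)^{-3/2}$ times a small prefactor, and summing all contributions closes the bound $\|V(t)\|_{L^\infty}\le \frac{1}{4}(1+t)^{-3/2}$ for $C_0$ and $\eta$ small.
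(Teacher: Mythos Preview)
Your proposal is correct and follows essentially the same approach as the paper: Duhamel formula, the $G_L+G_{HR}+G_{HS}$ splitting, the $[0,t/2]\cup[t/2,t]$ time decomposition for the low-frequency nonlinear part paired with the $L^{4/3}$ estimate \eqref{V 43}, and the separate treatment of $N_4$ via an interpolated bound on $\|\nabla^2 V\|_{L^4}$. One point worth sharpening: for the singular piece $G_{HS}*N_\varrho$, the paper does \emph{not} rely on $\eta$ being small (indeed $\eta$ is fixed first, depending only on $\mu,\lambda,\gamma$); instead it extracts a power of $C_0$ via the interpolation $\|V\|_{L^\infty}\le C\|V\|_{L^2}^{2/5}\|\nabla V\|_{L^\infty}^{3/5}\le CC_0^{2/5}(1+t)^{-6/5}$, trading some decay for smallness. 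Your phrase ``Sobolev embedding applied to \eqref{high energy estimate}'' would give only a bounded constant, so you should replace that step by this interpolation (or equivalently commit to choosing $\eta$ small, which also works but reorders the dependencies).
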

\begin{proof}
We use the Duhamel formula \eqref{Duhamel} to get that
\begin{equation}
	\label{linear 5}
	\begin{split}
 \|V(t)\|_{L^{\infty}} \leq &\|G(t) * V_0 \|_{L^{^{\infty}}}+ \int_0^t \|G(t - s) * N(V)(s)\|_{L^{^{\infty}}} ds\\
 \leq&\|G(t) * V_0 \|_{L^{\infty}}+\int_0^t \|G_L(t - s) * N(V)(s)\|_{L^{\infty}} ds\\
 &+\int_0^t \|G_{HR}(t - s) * N(V)(s)\|_{L^{^{\infty}}} ds+\int_0^t \|G_{HS}(t - s) * N(V)(s)\|_{L^{^{\infty}}} ds\\
 =:&\sum_{i=1}^{4}M_j
 \end{split}
\end{equation}
Then we have the uniform estimate for the linear part:
	\begin{equation}
	\label{M1}
	\begin{split}
		M_1\le & \norm{G_L(t) *V_0}_{L^\infty}+\norm{G_{HR}(t)*V_0}_{L^\infty}+\norm{G_{HS}(t)*V_0}_{L^\infty}\\
		\le & \norm{G(t)}_{L^\infty}\norm{V_0}_{L^1}+\norm{G_{HR}}_{L^\infty}\norm{V_0}_{L^1}+e^{-Ct}\norm{V_0}_{L^\infty}\\
        \le & \norm{G(t)}_{L^\infty}\norm{V_0}_{L^1}+\norm{G_{HR}}_{L^\infty}\norm{V_0}_{L^1}+e^{-Ct}\norm{V_0}_{L^2}^\frac 14 \norm{\nabla^2 V_0 }_{L^2}^\frac 34\\
		\le & C(1+t)^{-\frac 32}C_0+Ce^{-Ct}C_0+Ce^{-ct}C_0^\frac 14\le CC_0^\frac 14 (1+t)^{-\frac 32}.
	\end{split}
\end{equation}
Furthermore, for the estimate of the low frequency part, we use \eqref{low frequency}, \eqref{basic energy estimate}, \eqref{high energy estimate}, \eqref{regularity criteria}, \eqref{grad V 4} and above $L^\frac 43$-estimate \eqref{V 43} to get:
\begin{equation*}
	\begin{split}
		M_2\leq&\int_0^{t} \norm{G_L(t-s)*(|N_\varrho| +|N_1|+|N_2|+|N_3| )}_{L^\infty }ds\\
        &+ \int_0^t \norm{G_L(t-s)*N_4 (\varrho, u)(s)}_{L^\infty }ds\\
        =:&M_{21}+M_{22}.
        \end{split}
\end{equation*}
Applying \eqref{V 43}, \eqref{grad V 4} and \eqref{grad V 2}, we arrive at
\begin{equation*}
	\begin{split}
		M_{21}\leq&\int_{0}^{\frac{t}{2}}\norm{G_L(t-s)}_{L^\infty}\| (|N_\varrho| +|N_1|+|N_2|+|N_3|)(s)\|_{L^1}ds\\
        &+\int_{\frac{t}{2}}^{t}\norm{G_L(t-s)}_{L^2}\|(|N_\varrho| +|N_1|+|N_2|+|N_3|)(s)\|_{L^2}ds\\
        \leq&\int_{0}^{\frac{t}{2}}(1+t-s)^{-\frac{3}{2}}\Big(\|\text{div}(\varrho u)\|_{L^1}+\|u\cdot{\nabla}u\|_{L^1}\\
        &+\|H_\gamma(\varrho)\nabla\varrho\|_{L^1}+\|G_{\alpha}(\varrho)\nabla\varrho\nabla u\|_{L^1}\Big)ds\\
&+\int_{\frac{t}{2}}^{t}(1+t-s)^{-\frac{3}{4}}\Big(\|\text{div}(\varrho u)\|_{L^2}+\|u\cdot{\nabla}u\|_{L^2}\\
&+\|H_\gamma(\varrho)\nabla\varrho\|_{L^2}+\|G_\alpha(\varrho)\nabla\varrho\nabla u\|_{L^2}\Big)ds\\
\leq&\int_{0}^{\frac{t}{2}}(1+t-s)^{-\frac{3}{2}}\Big((\|u\|_{L^{\frac{4}{3}}}+\|\varrho\|_{L^{\frac{4}{3}}})(\|\nabla\varrho\|_{L^4}+\|\nabla u\|_{L^4})\\
&+(1+\|\varrho\|_{L^\infty})\|\nabla\varrho\|_{L^2}\|\nabla u\|_{L^2}\Big)ds \\
&+\int_{\frac{t}{2}}^{t}(1+t-s)^{-\frac{3}{4}}\Big((\|\varrho\|_{L^2}+\|u\|_{L^2})(\|\nabla\varrho\|_{L^\infty}+\|\nabla u\|_{L^\infty})\\
&+(1+\|\varrho\|_{L^\infty})\|\nabla\varrho\|_{L^2}\|\nabla u\|_{L^\infty}\Big)ds\\
        \le & \int_0^\frac t2 (1+t-s)^{-\frac{3}{2}}(\norm{V(s)}_{L^{\frac 43}}\norm{\nabla V(s)}_{L^4} +\norm{\nabla V }^2_{L^2})ds\\
		& + \int_\frac t2 ^t (1+t-s)^{-\frac{3}{4}} (\norm{V(s)}_{L^2}\norm{\nabla V (s)}_{L^\infty}+\norm{\nabla V }_{L^2}\norm{\nabla V }_{L^\infty}) ds \\
		\le & C_0^{\frac{6}{5}} \int_0^\frac t2 (1+t-s)^{-\frac 32}(1+s)^{- \frac{7}{5}-\frac{7}{40}}ds+ C_0\int_\frac t2^t (1+t-s)^{-\frac 34}(1+s)^{- 2}ds \\
		\le& CC_0^{\frac{6}{5}} (1+t)^{-\frac 32},
	\end{split}
\end{equation*}
Before addressing $M_{22}$, we employ the following interpolation using the regularity criteria  \eqref{regularity criteria} and the highest-order energy estimate  \eqref{high energy estimate} with $k=4$, neither of which provides smallness
\begin{equation}\label{nabla2v}
	\norm{\nabla^2 V(t) }_{L^4}\le C\norm{\nabla V }_{L^\infty}^\frac 56 \norm{\nabla^4 V }_{L^2}^\frac 16\le C(1+t)^{-\frac 53}.
\end{equation}
Therefore, applying \eqref{V 43} and \eqref{nabla2v}, we have
\begin{eqnarray}
		M_{22}&=&\int_0^t \norm{G_L(t-s)*N_4 (\varrho, u)(s)}_{L^\infty }ds\nonumber\\
		&\le & C\int_0^t \norm{G_L(t-s) }_{L^\infty }\norm{\varrho\nabla^2 u (s)}_{L^1}ds\\
		&\le &C\int_0^t \norm{G_L(t-s)}_{L^\infty }\norm{V(s)}_{L^\frac 43}\norm{\nabla^2 V(s) }_{L^4}ds\nonumber\\
		&\le & CC_0 \int_0^t (1+t-s)^{-\frac 32}(1+s)^{-\frac 53-\frac {7}{40}}\le CC_0 (1+t)^{-\frac 32}.\nonumber
	\end{eqnarray}
Summing the above terms, we can obtain:
\begin{equation}
\label{M2}
	M_2\leq\int_0^t \norm{G_L(t-s)*N(V)(s)}_{L^\infty}ds \le C C_0^{\frac{3}{5}} (1+t)^{-\frac 32}.
\end{equation}
Since we only use the estimate for $\norm{G(t)}_{L^p}$, by \eqref{regular high} it comes similarly:
\begin{equation}
\label{M3}
	M_3\leq\int_0^t \norm{G_{HR}(t-s)*N(V)(s)}_{L^\infty}ds \le C C_0 (1+t)^{-\frac 32}.
\end{equation}
We now attempt to derive an estimate for the singular part. The following refined uniform estimate is crucial for gaining the required smallness:
\begin{equation}
\label{V infty estimate}
	\norm{V(t)}_{L^\infty}\le C\norm{V(t)}_{L^2}^{\frac{2}{5}}\norm{\nabla V(t) }_{L^\infty}^{\frac 3{5}}\le C C_0^{\frac{2}{5}}(1+t)^{-\frac{6}{5}}, 
\end{equation}
 Using \eqref{singular} and \eqref{regularity criteria} and the above estimate \eqref{V infty estimate}, we can derive that
\begin{equation}
\label{M4}
	\begin{split}
		M_4\leq&\int_0^t \norm{G_{HS}(t-s)*N_\varrho (\varrho, u)(s)}_{L^\infty}ds\\
		\le & C\int_0^t e^{-C(t-s)}\norm{V(s) }_{L^\infty}\norm{\nabla V (s)}_{L^\infty}ds\\
		\le & CC_0^{\frac{2}{5}}\int_0^t e^{-C(t-s)}(1+s)^{-\frac{6}{5}}(1+s)^{-2}ds\le CC_0^{\frac{2}{5}} (1+t)^{-\frac 32} .\\
	\end{split}
\end{equation}
Substitute \eqref{M1}, \eqref{M2}, \eqref{M3}, and \eqref{M4} into \eqref{linear 5}, then we can get  the refined uniform decay estimates \eqref{V infty} finally.
\end{proof}
\begin{lema}
\label{lema 4.3}
Suppose that regularity criteria \eqref{regularity criteria} and the initial assumption \eqref{initial V} are valid. Then we have
\begin{equation}
\label{gradient V infty}
	\norm{\nabla V(t) }_{L^\infty}\le \frac{\delta}{2} (1+t)^{-2},\,\,t\in[0,T].
\end{equation}	
\end{lema}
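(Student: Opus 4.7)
The strategy is to repeat the argument of Lemma \ref{lema 4.2} one derivative higher, starting from the Duhamel formula
\begin{equation*}
\nabla V(t) = \nabla G(t) * V_0 + \int_0^t \nabla G(t-s) * N(V)(s)\,ds,
\end{equation*}
and splitting each convolution according to $G = G_L + G_{HR} + G_{HS}$. For the low- and high-frequency-regular linear pieces, Lemma \ref{regularity criterion} gives $\|\nabla G_L(t)\|_{L^\infty} \le C(1+t)^{-2}$ and $\|\nabla G_{HR}(t)\|_{L^\infty} \le Ce^{-Ct}$, so convolving with $V_0 \in L^1$ contributes at most $CC_0(1+t)^{-2}$. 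The singular part $G_{HS}$ can carry no derivative, so we use $\nabla(G_{HS} * V_0) = G_{HS} * \nabla V_0$ and invoke \eqref{singular} together with an interpolation of the form $\|\nabla V_0\|_{L^\infty} \le C\|V_0\|_{L^2}^{1/3}\|\nabla^3 V_0\|_{L^2}^{2/3}$, which, by the $H^4$-regularity of $V_0$ and the $L^2$-smallness in \eqref{initial V}, bounds this contribution by $CC_0^{1/3}e^{-Ct} \le CC_0^{1/3}(1+t)^{-2}$.

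For the nonlinear low-frequency integral we split at $t/2$. On $[0,t/2]$ we pair $\|\nabla G_L(t-s)\|_{L^\infty} \le C(1+t-s)^{-2}$ with $\|N(V)(s)\|_{L^1}$; on $[t/2,t]$ we pair $\|\nabla G_L(t-s)\|_{L^2} \le C(1+t-s)^{-5/4}$ with $\|N(V)(s)\|_{L^2}$. The quadratic parts $N_\varrho, N_1, N_2, N_3$, all dominated by $C|V||\nabla V|$ thanks to \eqref{observation}, are handled exactly as in the proof of Lemma \ref{lema 4.2}, using the auxiliary $L^{4/3}$-bound \eqref{V 43}, the refined $L^\infty$-bound \eqref{V infty}, the energy estimates \eqref{basic energy estimate}--\eqref{high energy estimate}, and the interpolations \eqref{grad V 2}, \eqref{grad V 4}, \eqref{grad2 V 2}, \eqref{nabla2v}; the smallness of $C_0$ supplies the required small prefactor. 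The term $N_4\sim H_\alpha(\varrho)\nabla^2 u$, which is bounded pointwise by $C|\alpha-1||V||\nabla^2 V|$ after Taylor expansion, is controlled through \eqref{nabla2v} in the $L^\infty$-pairing and through direct energy bounds in the $L^2$-pairing, with \eqref{alphasmall} furnishing an extra smallness factor. The high-frequency-regular nonlinear part is analogous because of the exponential decay of $\|\nabla G_{HR}\|_{L^p}$.

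The main obstacle is the nonlinear singular contribution
\begin{equation*}
\int_0^t \nabla G_{HS}(t-s)*N_\varrho(\varrho,u)(s)\,ds = \int_0^t G_{HS}(t-s)*\nabla\operatorname{div}(\varrho u)(s)\,ds,
\end{equation*}
where the derivative must be moved inside so that \eqref{singular} can be applied. This produces a factor
\begin{equation*}
\|\nabla\operatorname{div}(\varrho u)(s)\|_{L^\infty} \le C\bigl(\|V(s)\|_{L^\infty}\|\nabla^2 V(s)\|_{L^\infty} + \|\nabla V(s)\|_{L^\infty}^2\bigr)
\end{equation*}
which is not controlled by the regularity criterion alone. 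One must interpolate $\|\nabla^2 V\|_{L^\infty}$ between \eqref{priori assumption} and the top-order energy bound $\|\nabla^4 V\|_{L^2}\le C$ from \eqref{high energy estimate} to obtain a pointwise rate $\|\nabla^2 V(s)\|_{L^\infty} \le C(1+s)^{-\beta}$ with $\beta$ large enough that, after multiplying by $e^{-C(t-s)}$ and integrating, the result is $O((1+t)^{-2})$ with a small constant proportional to a positive power of $C_0$ or $\eta$. Collecting the linear, low-frequency, high-regular, and high-singular nonlinear contributions, and taking $C_0$ small enough (and $\eta$ small if necessary), produces the target inequality $\|\nabla V(t)\|_{L^\infty}\le \tfrac{\eta}{2}(1+t)^{-2}$, thus closing the bootstrap.
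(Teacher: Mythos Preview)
Your proposal is correct and follows essentially the same route as the paper: Duhamel formula, $G_L/G_{HR}/G_{HS}$ splitting, time-splitting at $t/2$ for the low-frequency nonlinear integral, and handling the singular nonlinear term by moving the derivative onto $N_\varrho$ and interpolating $\|\nabla^2 V\|_{L^\infty}$ between $\|\nabla^4 V\|_{L^2}$ and $\|\nabla V\|_{L^\infty}$ (this is exactly the paper's \eqref{nabla2infty}). Two inconsequential deviations: the Gagliardo--Nirenberg exponents in your bound for $\|\nabla V_0\|_{L^\infty}$ should be $1/6,5/6$ rather than $1/3,2/3$, and on $[t/2,t]$ the paper pairs $\|\nabla G_L\|_{L^1}$ with $L^\infty$ (respectively $\|\nabla G_L\|_{L^{4/3}}$ with $L^4$ for $N_4$) instead of your $L^2$--$L^2$ pairing, but either choice closes the estimate.
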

\begin{proof}
We use again the Duhamel formula \eqref{Duhamel} to get
\begin{equation}
	\label{linear 6}
	\begin{split}
 \|\nabla V(t)\|_{L^{\infty}} \leq &\|\nabla(G(t) * V_0) \|_{L^{^{\infty}}}+ \int_0^t \|\nabla(G(t - s) * N(V)(s))\|_{L^{^{\infty}}} ds\\
 \leq&\|\nabla(G(t) * V_0) \|_{L^\infty}+\int_0^t \|\nabla(G_L(t - s) * N(V)(s))\|_{L^\infty} ds\\
 &+\int_0^t \|\nabla(G_{HR}(t - s) * N(V)(s))\|_{L^\infty} ds+\int_0^t \|\nabla(G_{HS}(t - s) * N(V)(s))\|_{L^\infty} ds\\
 =:&\sum_{i=1}^{4}N_j.
 \end{split}
\end{equation}
Then we can use \eqref{low frequency}, \eqref{regular high}, and \eqref{singular} to derive the gradient estimate for the linear part:
	\begin{equation}
	\label{N1}
		\begin{split}
			N_1\le & \norm{\nabla G_L* V_0 }_{L^\infty}+\norm{\nabla G_{HR}* V_0 }_{L^\infty}+\norm{\nabla G_{HS}* V_0 }_{L^\infty}\\
			\le & C\norm{V_0}_{L^1} { \norm{\nabla G_{L} }_{L^\infty}}+C\norm{\nabla V_0 }_{L^2}\norm{G_{HR}}_{L^2}+C\norm{\nabla V_0 }_{L^\infty}e^{-Ct}\\
            \leq&\norm{V_0}_{L^1} { \norm{\nabla G_{L} }_{L^\infty}}+C\norm{V_0}_{L^2}^\frac{1}{2}\norm{\nabla^2 V_0 }_{L^2}^{\frac 1{2}}\norm{G_{HR}}_{L^2}+C\norm{V_0}_{L^2}^\frac 16\norm{\nabla^3 V_0 }_{L^2}^{\frac 56}\|G_{HS}\|_{L^1}\\
			\le & CC_0(1+ t)^{-2 }+CC_0^{\frac{1}{2}} e^{-Ct}+CC_0^{\frac{1}{6}} e^{-Ct}\\
            \le & CC_0^{\frac{1}{6} }(1+t)^{-2}
            \leq  \frac{\eta}{4}(1+t)^{-2}.
		\end{split}
	\end{equation}
	We now handle the nonlinear term. For the low-frequency part, we consider the following decomposition: 
		\begin{equation*}
	\begin{split}
		N_{2}\leq&\int_0^t\norm{\nabla G_L (t-s)*(|N_\varrho| +|N_1| +|N_2|+|N_3| ) (s)}_{L^\infty}ds+\int_0^t\norm{\nabla G_L (t-s)*(N_4) (s)}_{L^\infty}ds \\
        =:&N_{11}+N_{12}.
        \end{split}
\end{equation*}
Here we use \eqref{low frequency}, \eqref{regularity criteria} and interpolations \eqref{grad V 2}, \eqref{grad V 4} and \eqref{V infty} to get
\begin{eqnarray}
		N_{21}&\leq&\int_0^\frac t2\norm{\nabla G_L(t-s) }_{L^\infty }\Big(\|\text{div}(\varrho u)\|_{L^1}+\|u\cdot{\nabla}u\|_{L^1}\nonumber\\
        &&+\|H_\gamma (\varrho)\nabla\varrho\|_{L^1}+\|G_\alpha(\varrho)\nabla\varrho\nabla u\|_{L^1}\Big)ds\nonumber\\
&&+\int_{\frac{t}{2}}^{t}\norm{\nabla G_L(t-s) }_{L^1 }\Big(\|\text{div}(\varrho u)\|_{L^\infty}+\|u\cdot{\nabla}u\|_{L^\infty}\nonumber\\
&&+\|H_\gamma(\varrho)\nabla\varrho\|_{L^\infty}+\|G_\alpha(\varrho)\nabla\varrho\nabla u\|_{L^\infty}\Big)ds\\
&\leq&\int_{0}^{\frac{t}{2}}(1+t-s)^{-2}\Big((\|u\|_{L^{\frac{4}{3}}}+\|\varrho\|_{L^{\frac{4}{3}}})(\|\nabla\varrho\|_{L^4}+\|\nabla u\|_{L^4})\nonumber\\
&&+(1+\|\varrho\|_{L^\infty})\|\nabla\varrho\|_{L^2}\|\nabla u\|_{L^2}\Big)ds\nonumber\\
&&+\int_{\frac{t}{2}}^{t}(1+t-s)^{-\frac{1}{2}}\Big((\|u\|_{L^\infty}+\|\varrho\|_{L^\infty})(\|\nabla\varrho\|_{L^\infty}+\|\nabla u\|_{L^\infty})\nonumber\\
&&+(1+\|\varrho\|_{L^\infty})\|\nabla\varrho\|_{L^\infty}\|\nabla u\|_{L^\infty}\Big)ds\nonumber\\
        &\le & \int_0^\frac t2 (1+t-s)^{-2}\Big(\norm{V (s)}_{L^\frac 43}\norm{\nabla V }_{L^4}+\norm{\nabla V(s) }_{L^2}^2\Big) ds\nonumber\\
        &&+\int_\frac t2^t (1+t-s)^{-\frac{1}{2}}\Big(\norm{V(s)}_{L^\infty}\norm{\nabla V(s) }_{L^\infty}+\norm{\nabla V(s) }_{L^{\infty}}^2 \Big)ds\nonumber\\
		&\le & CC_0^{\frac{6}{5}} \int_0^\frac t2 (1+t-s)^{-2}(1+s)^{-\frac 75-\frac{7}{40}}ds+CC_0^{\frac{1}{12}} \int^t_\frac t2 (1+t-s)^{-\frac 12 }(1+s)^{-2-1}ds\nonumber\\
        &\le &  CC_0^ {\frac{1}{12} }(1+t)^{-2},\nonumber
	\end{eqnarray}
where in the lasts step we use the following interpolation to gain the smallness:
\begin{equation}
\label{grad V infty estimate}
	\begin{split}
\|\nabla V\|_{L^\infty}\leq&C\|\nabla^3u\|^{\frac{2}{3}}_{L^2}\|u\|^{\frac{1}{3}}_{L^\infty}
\leq CC_0^{\frac{1}{12}}(1+t)^{-1}.
\end{split}
\end{equation}
and
\begin{equation*}
	\begin{split}
		N_{22}\leq&\int_0^t\norm{\nabla G_L (t-s)*N_4(\varrho, u)(s) }_{L^\infty}ds\\
        \le & \int_0^\frac t2 \norm{\nabla  G_L(t-s) }_{L^\infty }\norm{\varrho\nabla^2 u (s)}_{L^1}ds+\int_\frac t2^t \norm{\nabla G_{L}(t-s) }_{L^{\frac{4}{3}}}\norm{\varrho \nabla^2 u (s)}_{L^4}ds \\
		\le &C\int_0^\frac t2 \norm{\nabla G_L(t-s)}_{L^\infty }\norm{V(s)}_{L^\frac 43 }\norm{ \nabla^2 V(s) }_{L^4}ds\\
		&+C\int_ {\frac{t}{2}}^{t} \norm{\nabla G_L(t-s)}_{L^\frac 43 }\norm{V(s)}_{L^\infty }\norm{\nabla^2 V(s) }_{L^4 }ds \\
		\le & CC_0 \int_ {0}^{\frac{t}{2}}(1+t-s)^{-2}(1+s)^{-\frac 53-\frac{7}{40}}ds+CC_0^{\frac{1}{4}}\int_{\frac{t}{2}}^{t}(1+t-s)^{-\frac78}(1+s)^{-\frac 32-\frac 53}ds\\
        \le& CC_0^{\frac{1}{12}} (1+t)^{-2}.
	\end{split}
\end{equation*}
Combining the above two estimates together, we can obtain
\begin{equation}
\label{N2}
	\begin{split}
		 N_2\leq\int_0^t \norm{\nabla(G_L(t-s) *N(V)(s))}_{L^\infty}ds\le CC_0^{\frac{1}{12}} (1+t)^{-2}.
	\end{split}
\end{equation}
As for the part involving $G_{HR}$, from \eqref{regular high} we have
\begin{equation*}
	\begin{split}
		\norm{\nabla G_{HR}(t) }_{L^p}\le Ce^{-Ct}.
	\end{split}
\end{equation*}
Then it comes similarly:
\begin{equation}
\label{N3}
	\begin{split}
		N_3\leq \int_0^t \norm{\nabla (G_{HR}(t-s) *N(V)(s))}_{L^\infty}ds\le CC_0^{\frac{1}{12}} (1+t)^{-2}.
	\end{split}
\end{equation}
We now turn to the singular part, which requires the following estimates
\begin{equation}
\label{nabla2infty}
\|\nabla^2V\|_{L^\infty}\leq C\|\nabla^4V\|_{L^2}\|\nabla V\|_{L^\infty}
\leq C(1+t)^{-\frac{2}{3}}.
\end{equation}
Using the estimate for the Green function \eqref{singular} and interpolations \eqref{V infty estimate} and \eqref{grad V infty estimate}, \eqref{nabla2infty}, we can get
\begin{equation}
\label{N4}
\begin{split}
	N_4\leq&\int_0^t \norm{\nabla(G_{HS} (t-s)*N_\varrho (\varrho ,u) (s))}_{L^\infty}ds \le \int_0^t\norm{G_{HS}(t-s)* \nabla^2(\varrho u)(s) }_{L^\infty}ds\\
	\le & C\int_0^t e^{-C(t-s)}\br{\norm{V (s)}_{L^\infty}\norm{\nabla^2V (s)}_{L^\infty}+ \norm{\nabla V(s) }_{L^\infty}^2 }ds\\
	\le & CC_0^{\frac{1}{12}} \int_0^t e^{-C(t-s)}\br{(1+s)^{-\frac 32-\frac 23}+(1+s)^{-1-2} }ds\le CC_0^{\frac{1}{12}}(1+t)^{-2}.
\end{split}
\end{equation}
Summing over \eqref{N1}, \eqref{N2}, \eqref{N3}  \eqref{N4}, we obtain
\begin{equation}
\label{Nall}
\begin{split}
\int_0^t \|\nabla(G(t-s)*N(V)(s))\|_{L^\infty}ds
\leq&CC_0^{\frac{1}{12}}(1+t)^{-2}
\leq \frac{\eta}{4}(1+t)^{-2}.
\end{split}
\end{equation}
Combining \eqref{Nall} and \eqref{N1} yields \eqref{gradient V infty}.
\end{proof}
\begin{proof}[The proof of the Proposition \ref{pr11}]
Using Lemma \ref{lema 4.2} and Lemma \ref{lema 4.3} directly, the proof is completed.
\end{proof}
\section{Proof of Theorem \ref{main theorem}}
\label{sec-4}
With all the  conditional energy estimates and the a priori estimates at hand, we are ready to construct the global classical solutions emerged from  $L^1\cap L^2$ initial data and its optimal decay rates. As the local well-posedness have been verified by Zhang-Zhao \cite{2014 zhang-zhao}, it remained to establish the global well-posedness of classical solutions. 
\subsection{Global existence of classic solutions}

We now adopt the continuity argument to close the discussion. Set
\begin{equation*}\label{4.2}
	T^*=\sup\set{T| \eqref{priori assumption} \text{ holds on } [0,T] }.
\end{equation*}
We claim that 
\begin{equation}
		T^*=\infty.
	\end{equation}
    To prove this, assume by contradiction that $T^*<\infty$. By virtue of Proposition \ref{pr11} and Lemma \ref{basicestimate}-\ref{higherestimate}, the solution $(\rho,u)|_{t=T^*}$ satisfies the regularity conditions of the initial values. Applying local existence of solutions to this terminal data allow us to extend the classical solution to $\mathbb{R}^3\times (0, T_{**}]$ with $T^{**}>T^*,$ contradicting the maximality of $T^*$ in \eqref{4.2}. Therefore, the assumption $T^*<\infty$ must be false, and the solution must exist globally, i.e., $T^*=\infty.$

\subsection{Optimal time-decay estimates in $H^k$-norm ($0\leq k\le 2$)} 
It remains to check the $H^k$-decay estimates \eqref{Hk decay} for $k=0,1,2$. The difficulties comes mainly from the high-order derivatives, as $G_{HR}$ contains at most first-order derivatives. We follows the strategy in Section \ref{Section 4} to derive the decay rate. 

For the case $k=0$, using the Duhamel formula \eqref{Duhamel} and 
\begin{eqnarray}
 \| V(t)\|_{L^2} &\leq &\|G(t) * V_0 \|_{L^2}+ \int_0^t \|G(t - s) * N(V)(s)\|_{L^2} ds\nonumber\\
 &\leq&\|G_L(t) * V_0 \|_{L^2}+\|G_{HR}(t) * V_0 \|_{L^2}+\|G_{HS}(t) * V_0 \|_{L^2}\nonumber\\
 &&+\int_0^t \|G_L(t - s) * N(V)(s)\|_{L^2} ds+\int_0^t \|G_{HR}(t - s) * N(V)(s)\|_{L^2} ds\nonumber\\
 &&+\int_0^t \|G_{HS}(t - s) * N(V)(s)\|_{L^2} ds\nonumber\\
&\leq&C\|G_L(t)\|_{L^2}\| V_0\|_{L^1} +C\|G_{HR}(t)\|_{L^2}\| V_0\|_{L^1}+C\|G_{HS}(t)\|_{L^1}\| V_0\|_{L^2}\nonumber\\
&&+C\int_0^t \|G_L(t - s)\|_{L^2} \|N(V)(s)\|_{L^1} ds\nonumber\\
 &&+C\int_0^t \|G_{HR}(t - s)\|_{L^2} \|N(V)(s)\|_{L^1} ds+C\int_0^t \|G_{HS}(t - s)\|_{L^1} \|N(V)(s)\|_{L^2} ds\nonumber\\
 &\leq&C(1+t)^{-\frac{3}{4}}+C\int_0^t\Big((1+t-s)^{-\frac{3}{4}}+e^{-C(t-s)}\Big)\|N(V)(s)\|_{L^1}ds\nonumber\\
 &&+C\int_0^t e^{-C(t-s)}\|N(V)(s)\|_{L^2}ds\nonumber\\
 &\leq&C(1+t)^{-\frac{3}{4}}+C\int_0^t\Big((1+t-s)^{-\frac{3}{4}}+e^{-C(t-s)}\Big)(1+s)^{-\alpha_1}ds\\
 &&+C\int_0^t e^{-C(t-s)}((1+s)^{-\alpha_2}ds\nonumber\\
 &\leq&C(1+t)^{-\frac{3}{4}},\nonumber
\end{eqnarray}
where $\alpha_1>1$ and $\alpha_2>1$ are verified in Section \ref{Bootstrap}.

We assume that equation \eqref{Hk decay} holds for $|\alpha|=k-1$, and inductively prove its validity for $|\alpha|=k, 1\leq k\leq 2.$
According to Duhamel's principle, we have
\begin{equation}
	\label{linear 7}
	\begin{split}
 \|\nabla^k V(t)\|_{L^2} \leq &\|\nabla^k(G(t) * V_0) \|_{L^2}+ \int_0^t \|\nabla^k(G(t - s) * N(V)(s))\|_{L^2} ds\\
 \leq&\|\nabla^k(G(t) * V_0) \|_{L^2}+\int_0^t \|\nabla^k(G_L(t - s) * N(V)(s))\|_{L^2} ds\\
    &+\int_0^t \|\nabla^k(G_{HR}(t - s) * N(V)(s))\|_{L^2} ds\\
    &+\int_0^t \|\nabla^k(G_{HS}(t - s) * N(V)(s))\|_{L^2} ds\\
 =:&\sum_{i=1}^{4}L_j.
 \end{split}
\end{equation}
Now we'd like to estimate them one by one. For $L_1$, we can derive it from \eqref{initial V},
\begin{equation}
	\begin{split}\label{initial}
		\norm{\nabla^k G (t)* V_0 }_{L^2}\le & \norm{\nabla^k G_L (t)* V_0 }_{L^2}+\norm{\nabla^k G_{HR} (t)* V_0 }_{L^2}+\norm{\nabla^k G_{HS} (t)* V_0 }_{L^2}\\
		\le &C\norm{\nabla^k G_L(t) }_{L^2}\norm{V_0}_{L^1}+C\norm{\nabla G_{HR}(t) }_{L^2}\norm{\nabla^{k-1} V_0}_{L^2}+Ce^{-Ct}\norm{\nabla^k V_0 }_{L^2}\\
		\le &C  (1+t)^{-\frac{3}{4}-\frac{k}{2}} + C e^{-Ct}+C e^{-Ct}\le C(1+t)^{-\frac{3}{4}-\frac{k}{2}}.
	\end{split}
\end{equation}
The $L_2$ can be splited into the following two term:
\begin{equation}\begin{aligned}
L_2\leq&\int_{0}^{\frac{t}{2}}\|\nabla^k G_{L}(t-s)\|_{L^2}\|N(V)(s)\|_{L^1}ds+\int_{\frac{t}{2}}^{t}\|\nabla G_{L}(t-s)\|_{L^1}\|\nabla^{k-1}N(V)(s)\|_{L^2}ds\\
=:&L_{21}+L_{22}.
\end{aligned}\end{equation}
For $L_{21}$, we have
\begin{equation}\begin{aligned}
L_{21}\leq&C\int_{0}^{\frac{t}{2}}\|\nabla^kG_{L}(t-s)\|_{L^2}\|N(V)(s)\|_{L^1}ds\\
\leq&C\int_{0}^{\frac{t}{2}}(1+t-s)^{-\frac{3}{4}-\frac{k}{2}}(1+s)^{-\alpha_1}ds\\
\leq&C(1+t)^{-\frac{3}{4}-\frac{k}{2}},
\end{aligned}\end{equation}
where $\alpha_1>1$ is ensured by Section \ref{Bootstrap}.
The term $L_{22}$ requires careful analysis and will be discussed in detail  using \eqref{V infty}, \eqref{grad V 2}, \eqref{grad2 V 2}, \eqref{nabla2v}, \eqref{gradient V infty}:
\begin{eqnarray}
L_{22}&\leq& C\int_{\frac{t}{2}}^{t}\|\nabla G_{L}(t-s)\|_{L^1}\||\nabla^{k-1}N_{\varrho}|+|\nabla^{k-1} N_1|+|\nabla^{k-1} N_2|+|\nabla^{k-1} N_3|)(s)\|_{L^2}ds\nonumber\\
&&+C\int_{\frac{t}{2}}^{t}\|\nabla G_{L}(t-s)\|_{L^{\frac{4}{3}}}\|\nabla^{k-1}N_4(s)\|_{L^{\frac{4}{3}}}ds\nonumber\\
&\leq& C\int_{\frac{t}{2}}^{t}(1+t-s)^{-\frac{1}{2}}\Big(\|V\|_{L^\infty}\|\nabla^{k}V\|_{L^2}\\
&&+\|\nabla V\|_{L^\infty}\|\nabla^{k-1}V\|_{L^2}+\|\nabla V\|_{L^\infty}\|\nabla^{k}V\|_{L^2}+\|\nabla V\|^2_{L^\infty}\|\nabla^{k-1}V\|_{L^2}\Big)(s)ds\nonumber\\
&&+\int_{\frac{t}{2}}^{t}(1+t-s)^{-\frac{7}{8}}\Big(\|\varrho\|_{L^\infty}\|\nabla^{k+1}u\|_{L^{\frac{4}{3}}}+\|\nabla^{k-1}\varrho\|_{L^2}\|\nabla^2u\|_{L^4}\Big)ds\nonumber\\
&\leq& C(1+t)^{-\frac{3}{4}-\frac{k}{2}},\nonumber
\end{eqnarray}
where we have used 
\begin{equation}\begin{aligned}
\|\nabla^3u\|_{L^{\frac{4}{3}}}\leq&C(1+t)^{-\frac{21}{20}}.
\end{aligned}\end{equation}
Regarding high-frequency nonlinear estimates, we obtain
\begin{equation}\begin{aligned}
L_3\leq&C\int_{0}^{t}\|\nabla^k(G_{HR}(t-s)*N(V)(s))\|_{L^2}ds\\
\leq&C\int_{0}^{t}\|\nabla G_{HR}(t-s)\|_{L^1}\|\nabla^{k-1}N(V)(s)\|_{L^2}ds\\
\leq&C\int_{0}^{t}e^{-C(t-s)}\Big(\|V\|_{L^\infty}\|\nabla^{k}V\|_{L^2}+\|\nabla V\|_{L^\infty}\|\nabla^{k-1}V\|_{L^2}+\|\nabla V\|_{L^\infty}\|\nabla^{k}V\|_{L^2}\\
&+\|\varrho\|_{L^\infty}\|\nabla^{k+1}u\|_{L^2}+\|\nabla^2u\|_{L^\infty}\|\nabla^{k-1}\varrho\|_{L^2}\Big)(s)ds\\
\leq& C(1+t)^{-\frac{3}{4}-\frac{k}{2}},
    \end{aligned}\end{equation}
where we have used \eqref{nabla2infty},\eqref{grad V 2}, \eqref{grad2 V 2}, \eqref{nabla2v} and
\begin{equation}\begin{aligned}\label{nabla3v}
\|\nabla^3V\|_{L^2}\leq& C\|\nabla^4V\|^{\frac{2}{5}}_{L^2}\|V\|^{\frac{3}{5}}_{L^\infty} \leq C(1+t)^{-\frac{9}{10}}.
\end{aligned}\end{equation}
    We establish the following nonlinear bound for the singular part
\begin{equation}\begin{aligned}\label{L44}
L_4\leq&C\int_{0}^{t}\|G_{HS}(t-s)*\nabla^k\text{div}(\varrho u)(s)\|_{L^2}ds\\
\leq&C\int_{0}^{t}e^{-C(t-s)}\|\nabla^k\text{div}(\varrho u)(s)\|_{L^2}ds\\
\leq&C\int_{0}^{t}e^{-c(t-s)}\Big((\|u\|_{L^\infty}+\|\varrho\|_{L^\infty})(\|\nabla^{k+1}\varrho\|_{L^2}+\|\nabla^{k+1}u\|_{L^2})\\
&+(\|\nabla u\|_{L^\infty}+\|\nabla\varrho\|_{L^\infty})(\|\nabla^{k}\varrho\|_{L^2}+\|\nabla^{k}u\|_{L^2})\Big)(s)ds\\
\leq&C\int_{0}^{t}e^{-C(t-s)}\Big(\|V\|_{L^\infty}\|\nabla^{k+1}V\|_{L^2}+\|\nabla V\|_{L^\infty}\|\nabla^{k}V\|_{L^2}\Big)(s)ds\\
\leq& C(1+t)^{-\frac{3}{4}-\frac{k}{2}},
    \end{aligned}\end{equation}
where we have used \eqref{grad V 2}, \eqref{grad2 V 2}, \eqref{nabla3v}, \eqref{V infty}, \eqref{gradient V infty}.

Putting together \eqref{initial} -\eqref{L44} yields
\begin{equation}\begin{aligned}
    \|\nabla^k V\|_{L^2}\leq C(1+t)^{-\frac{3}{4}-\frac{k}{2}}.
    \end{aligned}\end{equation}
Thus, we have completed the proof of Theorem \ref{main theorem}.

    \begin{rmk}
   When handling the singular terms, we need to distribute all derivatives to the nonlinear terms. As a result, we can only obtain the optimal decay for $\|\nabla^k u\|_{L^2}$ in this setting.     
    \end{rmk}

\section*{Data availability}
No data was used for the research described in the article.

\section*{Ackonwledgments}
X.-D. Huang is partially supported by CAS Project for Young Scientists in Basic
Research, Grant No. YSBR-031 and NNSFC Grant Nos. 12494542 and 11688101.

\end{document}